\newcommand{\dR}{\mathbb R}
\def\diag{\mathop{\mathrm{diag}}}
\DeclareMathOperator{\aff}{aff}
\newcommand{\m}{{\mathbf m}}
\newtheorem{theorem}{Theorem}
\newtheorem{corollary}{Corollary}
\newtheorem{lemma}{Lemma}
\newtheorem{definition}{Definition}
\newtheorem{proposition}{Proposition}
\theoremstyle{remark}
\newtheorem{remark}{Remark}
\newtheorem{example}{Example}
\DeclareMathOperator{\Id}{Id }
\DeclareMathOperator{\SO}{SO}
\DeclareMathOperator{\Orth}{O}
\DeclareMathOperator{\Hom}{Hom}
\DeclareMathOperator{\pr}{pr}
\DeclareMathOperator{\tr}{tr}
\DeclareMathOperator{\orth}{\mathfrak{o}}
\DeclareMathOperator{\cal G}{\mathfrak{gl}}
\title[Rolling pseudo-manifolds]{An intrinsic formulation for rolling pseudo-Riemannian manifolds}
\author[I. Markina, F. Silva Leite]{Irina Markina, F\'atima Silva Leite}
\address{Department of Mathematics, University of Bergen, Norway.}
\email{irina.markina@uib.no}
\address{Department of Mathematics and Institute of Systems and Robotics, University of Coimbra, Portugal.}
\email{fleite@mat.uc.pt}
\thanks{The first author was supported by NFR grants \#204726/V30 and 213440/BG. The second author was partially supported by FCT under project PTDC/EEA-CRO/113820/2009.}
\subjclass[2000]{37J60, 53A17, 53A35}
\keywords{Pseudo-Riemannian manifolds, pseudo-Euclidean space, rolling map, Christoffel symbols.}
\begin{document}

\maketitle


\begin{abstract}
	In the present work we define the rolling of one pseudo-Riemannian manifold over another without slipping and twisting. We compare the definition of the rolling without slipping and twisting of two manifolds isometrically embedded into a pseudo-Euclidean space with the rolling defined only by the intrinsic data, namely by the metric tensors on manifolds. The smooth distribution on the configuration space, encoding the no-slipping and no-twisting kinematic conditions is constructed. Some results concerning the causal character of the rolling curves are also included. Several examples are presented along the paper to illustrate concepts and help to understand the theoretical results.
\end{abstract}


\section{Introduction}

Motions of systems with nonholonomic constraints can be found in the work of great mathematicians as Newton, Euler, Bernoulli and Lagrange. More recently, nonholonomic systems have  attracted  much attention in control
literature due to their numerous applications in physics and  engineering problems. For instance, in a robotic system if the controllable degrees of freedom are less than the total degrees of freedom in the configuration space,  the system is nonholonomic.  Nowadays, the interest in this area is increasing and one can find references to potential applications of nonholonomic systems, for instance, in neurobiology and economics. For a recent survey on non-holonomic systems we refer to~\cite{Bloch,SolYusZeg}.

Nonholonomic constraints can be analyzed from the viewpoint of sub-Riemannian geometry. This is the case when the constraints define a completely non-integrable (or bracket generating) subbundle of the tangent bundle of a Riemannian manifold (see, for instance, \cite{Brockett,LiuSussmann,Montgomery} for work interconnecting sub-Riemannian geometry and control theory).  But, if the manifold is equipped with a pseudo-Riemannian metric (the metric tensor is nondegenerate but not positive definite), we will be in the presence  of problems in sub-pseudo-Riemannian geometry (\cite{ChangMarkinaVasiliev,Groch2,Groch3,KM,KM3}).
The term \emph{semi} is also used in some literature with the same meaning as \emph{pseudo}. Such is the case in \cite{ONeill}, our main reference about semi-Riemannian geometry.

A pair $(M, \widehat{M})$ of $n$-dimensional pseudo-Riemannian manifolds, rolling on each other without slipping and twisting, also form a nonholonomic system posing many theoretical challenges and interesting control problems. To  better understanding the geometry of this motion, one needs tools from sub-pseudo-Rie\-mannian geometry. In a Riemannian context, rolling has been approached from two viewpoints: either regarding the manifolds as subsets of an Euclidean space of higher dimension, or defining rolling intrinsically. The first viewpoint makes sense due to the work of Nash in~\cite{Nash} that guarantees the existence of a global isometric imbedding of any $m$-dimensional Riemannian manifold in some Euclidean space of bigger dimension. The classical definition of rolling, as  given, for instance, in \cite{Sharpe}, corresponds to this extrinsic viewpoint. Based on this general definition, the kinematic equations for rolling particular Riemannian manifolds have been derived for instance in \cite{HKS,HS,Zimm}. An intrinsic formulation of rolling is the approach taken in \cite{AS} and \cite{BH} for $2$-surfaces and generalized in \cite{ChKok,GrMol,GGLM,GGLM2} for arbitrary Riemannian manifolds of any dimension. We want to mention, that in~\cite{Grong} a rolling without slipping or twisting of $n$-dimensional manifolds endowed with a connection, not necessarily compatible with any kind of metric, were defined and in~\cite{ChKok} even more general constructions for tensors bundles were made. Our paper develops the ideas of~\cite{GGLM2}, explaining the relation between extrinsic and intrinsic approach, and provides numerous examples illustrating main ideas and showing new features of the presence of the pseudo-Riemannian metrics in contrast to the Riemannian ones on rolling manifolds.

When the manifolds $M$ and $\widehat{M}$ are both isometrically embedded in some bigger pseudo-Riemannian manifold $\overline{M}$, one can develop an extrinsic formulation of rolling, as a rigid motion inside $\overline{M}$, subject to no-slip and no-twist constraints. This si\-tuation has been explored for some particular cases where $M$ is a Lorentzian sphere (\cite{KLF}), $M$ is a pseudo-hyperbolic space (\cite{ML}), and  $M$ is a pseudo-orthogonal group (\cite{CL}). In all these cases, $\widehat{M}$ has been chosen to be the affine tangent space of $M$ at a point $p_0$. It turns out that any pseudo-Riemannian manifold has a global isometric embedding into a pseudo-Euclidean space (\cite{Clarke}). So, as in the Riemannian situation, both the extrinsic and the intrinsic approaches make sense. As far as we know, the rigorous intrinsic viewpoint of rolling has not been developed in the pseudo-Riemannian case.

The structure of the present paper is the following. After introducing the basic notations in Section~\ref{sec_basic} we present the definition of the extrinsic rolling in Section~\ref{sec:Def}.
We start with the generalization of the classical  defi\-ni\-tion of rolling given in \cite{Sharpe}, with some convenient adaptations as done in~\cite{GGLM2}. At this stage we assume that $M$ and $\widehat{M}$ are both isometrically embedded in $\dR^n_\nu$, the pseudo-Euclidean space of dimension $n$ and index $\nu$. We then proceed with the intrinsic definition of rolling in Section~\ref{introll}, where we compare the intrinsic component of the rolling map that depends only on metric data with the extrinsic part, that involves the information about concretely chosen isometric embedding. In Section~\ref{sec:dist} we present the smooth distribution on the configuration space caring kinematic restrictions of no slipping and no twisting. The causal character of the rolling map is studied in Section~\ref{Causal character}, where we give some conditions under which the causal character of a rolling curve is preserved. The last Section~\ref{extended} reveals the idea of inclusion of the configuration space of the rolling problem as a smooth sub-bundle to a vector bundle. Notes that the configuration space is defined as a smooth fiber bundle with typical fiber isomorphic to a group of pseudo-Euclidean rotations.


\section{Basic facts about pseudo-Riemannian\\geometry}\label{sec_basic}

We start with the basic background about pseudo-Riemannian geometry that will appear throughout the paper. For more details, we refer to O'Neill~\cite{ONeill}. A pseudo-Riemannian manifold is a smooth manifold $\overline{M}$ furnished with a metric tensor $\overline{g}$ (a symmetric
nondegenerate ($0,2$) tensor field of constant index). The common value $\nu$ of the index $\overline{g}_x$
at each point $x$ on a pseudo-Riemannian manifold $\overline{M}$ is called the index of $\overline{M}$ and $0\leq \nu \leq
\dim{(\overline{M})}$. If $\nu =0$, each $\overline{g}_x$ is then a (positive definite) inner product on $T_x\overline{M}$
 and $\overline{M}$ is a Riemannian manifold. If $\nu=1$ and $\dim{(\overline{M})}\geq 2$, $\overline{M}$ is called a Lorentz manifold.

 If $(\overline{M},\overline{g} )$ is a pseudo-Riemannian manifold and $v\in T_x\overline{M}$, then
$v$ is \emph{spacelike} if $\overline{g}(v,v)>0$ or $v=0$; $v$ is \emph{timelike} if $\overline{g}(v,v)<0$;
$v$ is \emph{null} if $\overline{g}(v,v)=0$ and $v\neq 0$.
Since $\overline{g}(v,v)$ may be negative, the norm $|v|$ of a vector is defined to be $|v|:=|\overline{g}(v,v)|^{1/2}$. A unit vector $v$ is a vector with  norm $1$, that is $\overline{g}(v,v)=\pm 1$. As usual, a set of mutually orthogonal unit vectors is said to be \emph{orthonormal}. It is known that always there is an orthonormal basis, such that first $\nu$ vectors are unite timelike and the rest $n-\nu$ are unite spacelike orthogonal vectors~\cite{ONeill}.

   Let $M$ be a submanifold of a pseudo-Riemannian manifold $(\overline{M},\overline{g})$  and $\imath: M\hookrightarrow
\overline{M}$  the inclusion map. Then $M$ is a pseudo-Riemannian submanifold of $\overline{M}$ if the pullback metric
$g=\imath^*(\overline{g})$ is a metric tensor on $M$. If $M$ is equipped with the induced metric $g$, then $\imath$ is an
isometric embedding. In subsequent sections, we use  $\langle  \cdot,\cdot \rangle$ as an alternative notation for $g$.

    Let $M$ be a pseudo-Riemannian submanifold of $\overline{M}$ (write $M\subset \overline{M}$), and $x\in M$. Each tangent
space $T_xM$ is, by definition, a nondegenerate subspace of $T_x\overline{M}$. Consequently, $T_x\overline{M}$ decomposes as
 a direct sum
 \begin{equation}\label{splitting}
 T_x\overline{M}=T_xM\oplus T^{\perp}_xM, \quad \forall\ x \in M,
 \end{equation}
 and  $T^{\perp}_xM$ is also nondegenerate. Vectors in $T^{\perp}_xM$ are said to be normal to $M$, while those
in $T_xM$ are, of course, tangent to $M$. Similarly, a vector field $Z$ on $\overline{M}$ is normal (respectively tangent)
to $M$ provided each value $Z_x$, for $x\in M$  belongs to  $T^{\perp}_xM$ (respectively $T_xM$).

If $X,Y$ are vector fields on $M$, we can extend them to $\overline{M}$, denoting as $\overline X$, $\overline Y$, apply the ambient Levi Civita connection $\overline{\nabla}$ with respect to $\overline{g}$ and then decompose at points of $M$ to get
\begin{equation} \label{normaldecomp}
\overline{\nabla}_{\overline X}\overline Y=\Big(\overline{\nabla}_{\overline X}\overline Y\Big)^{\top}+ \big(\overline\nabla_{\overline X}\overline Y\big)^{\perp}=\nabla_XY +\big(\overline\nabla_{\overline X}\overline Y\big)^{\perp},
\end{equation}
where $\nabla$ is a Levi-Civita connection with respect to the induced metric on $M$ and the last term, given by the orthogonal projection to $T^{\perp}M$, measures the difference between the intrinsic connection $\nabla$ on $M$ and the ambient connection $\overline{\nabla}$ on $\overline M$.

The analogous considerations can be done for normal vector fields on $M$.
If $X$ is a tangent vector field and $Z$ is a normal vector field to $M$, we have
\begin{equation} \label{normaldecomp1}
\overline{\nabla}_{\overline X}{\overline Z}=(\overline{\nabla}_{\overline X}{\overline Z} )^{\top} +\nabla^{\perp}_XZ,
\end{equation}
where $\nabla^{\perp}$ is the \emph{normal connection} of $M\subset \overline{M}$, that is the function $\nabla^{\perp}$ that, to each pair $(X,Z)$ of smooth vector fields,  $X$  tangent to $M$ and $Z$ normal to $M$, assigns a vector field $\nabla^{\perp}_XZ$ normal to $M$.

If $t\mapsto \gamma(t)$ is a curve in $M$, $V$ is a smooth vector field tangent to $M$ along $\gamma$, and $W$ is a smooth vector field normal to $M$ along $\gamma$, then the formulas~\eqref{normaldecomp} and~\eqref{normaldecomp1} have their analogous in terms of covariant derivatives along $\gamma$:
  \begin{equation} \label{gaussformulacurves}
\frac{\overline{D}}{dt}\overline V=\frac{D}{dt}V+\Big(\frac{{\overline D} }{dt}\overline V\Big)^{\perp},\qquad
\frac{\overline{D}}{dt}\overline W=\Big(\frac{\overline D}{dt}\overline W\Big)^{\top}+\frac{D^{\perp}}{dt}W,
\end{equation}
where $\frac{\overline{D}}{dt}$ ($\frac{D}{dt}$) denote extrinsic (intrinsic) covariant derivative along $\gamma$, $\frac{D^{\perp}}{dt}$ is the normal covariant derivative along $\gamma$, and $\overline V$, $\overline W$ are extensions of $V$ and $W$ in a neighborhood of $\gamma$ considered as a curve in $\overline M$, see~\cite{Lee,ONeill}.

All curves are assumed to be absolutely continuos. A tangent vector field $V$ along a curve $\gamma$ is said to be a \emph{tangent parallel vector field} along $\gamma$ if $\frac{DV}{dt}\equiv 0$ for almost all $t$. Analogously, a normal vector field $Z$ along $\gamma$ is said to be a \emph{normal parallel vector field} along $\gamma$ if $\frac{D^{\perp}Z}{dt}\equiv 0$ for almost all $t$.

 The following  holds, both for tangent and for normal parallel vector fields along curves in $M$.
 \begin{lemma}~\cite{Lee}
 Let $[a,b]\ni t\mapsto \gamma(t)$ be an absolutely continuous curve in~$M\subset \overline{M}$.
 \begin{itemize}
 \item[(1)] If $Y_0\in T_{\gamma (a)}M$, then there is a unique tangent parallel vector field $Y$ along $\gamma$ such that $Y(a)=Y_0$.
\item[(2)] If $Z_0\in T^{\perp}_{\gamma (a)}M$, then there is a unique normal  vector field $Z$ along $\gamma$ such that $Z(a)=Z_0$.
 \end{itemize}
 \end{lemma}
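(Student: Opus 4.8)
The plan is to reduce both statements to the standard existence--uniqueness theory for linear ordinary differential equations with $L^1$ coefficients, i.e.\ to Carath\'eodory's theorem, since $\gamma$ is only assumed absolutely continuous and hence $\dot\gamma$ is merely locally integrable.

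First I would treat the tangent case. Using compactness, cover $\gamma([a,b])$ by finitely many coordinate charts of $M$ and, by a Lebesgue-number argument, choose a partition $a=t_0<t_1<\dots<t_N=b$ such that each $\gamma([t_{k-1},t_k])$ lies in a single chart with coordinates $(x^1,\dots,x^n)$ and Christoffel symbols $\Gamma^i_{jl}$ of the induced Levi-Civita connection $\nabla$. Writing $V(t)=V^i(t)\,\partial_{x^i}$ along $\gamma$, the condition $\frac{DV}{dt}\equiv 0$ becomes the linear system
\[
\dot V^i(t) = -\,\Gamma^i_{jl}(\gamma(t))\,\dot\gamma^j(t)\,V^l(t),\qquad i=1,\dots,n.
\]
Since the $\Gamma^i_{jl}$ are smooth, hence bounded on the compact set $\gamma([t_{k-1},t_k])$, and $\dot\gamma^j\in L^1[t_{k-1},t_k]$, the coefficient matrix $A^i_l(t):=-\Gamma^i_{jl}(\gamma(t))\dot\gamma^j(t)$ lies in $L^1([t_{k-1},t_k];\dR^{n\times n})$. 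Carath\'eodory's theorem for linear systems then provides, for each prescribed value at $t_{k-1}$, a unique absolutely continuous solution on all of $[t_{k-1},t_k]$ (linearity together with Gr\"onwall's inequality excludes finite-time blow-up). Starting from $Y(a)=Y_0$ and solving successively on $[t_0,t_1],[t_1,t_2],\dots$, matching on each subinterval the initial value to the terminal value produced on the previous one, yields a tangent parallel field $Y$ along $\gamma$ with $Y(a)=Y_0$; uniqueness on each piece forces uniqueness of the whole field.

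The normal case is identical once one replaces the coordinate frame $\{\partial_{x^i}\}$ by a local smooth frame of the normal bundle $T^\perp M$ --- which exists because, by~\eqref{splitting}, $T^\perp M$ is a smooth nondegenerate subbundle of $T\overline M|_M$ --- and replaces $\nabla$ by the normal connection $\nabla^\perp$ of~\eqref{normaldecomp1}; the connection functions of $\nabla^\perp$ in such a frame are smooth, so $\frac{D^{\perp}Z}{dt}\equiv 0$ again takes the form of a linear system with $L^1$ coefficients, to which the same compactness--patching argument applies, giving the unique normal parallel field with $Z(a)=Z_0$.

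The one delicate point, and the only place where the argument departs from the classical smooth-curve situation, is precisely the low regularity: because $\dot\gamma$ is merely integrable one cannot invoke Picard--Lindel\"of directly, and must instead check the Carath\'eodory hypotheses --- measurability of $t\mapsto A(t)v$ and an integrable bound $\lvert A(t)v\rvert\le m(t)\lvert v\rvert$ with $m\in L^1$ --- both of which are immediate from smoothness of the connection coefficients along $\gamma$ and $L^1$-integrability of $\dot\gamma$. Everything else is the standard linear-ODE machinery combined with the finite patching over charts.
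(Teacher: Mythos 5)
Your proof is correct, and it is essentially the standard argument behind the result the paper simply cites from Lee: reduction of the parallel-transport condition to a linear ODE system in local frames, solved and patched over a finite chart cover. The only refinement you add — invoking Carath\'eodory's theorem with $L^1$ coefficients instead of Picard--Lindel\"of, since $\gamma$ is merely absolutely continuous — is exactly the right adaptation to the paper's setting, so there is nothing to object to.
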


 With the notations above, the map
  \begin{equation} \label{transport}
\begin{array}{cccc}
P_a^b(\gamma)\colon &T_{\gamma (a)}M &\rightarrow &T_{\gamma (b)}M
\smallskip
\\
&Y(a)&\mapsto &Y(b)
\end{array}
\end{equation}
is called the \emph{tangent parallel translation} of $Y_0$ along $\gamma$, from the point $p=\gamma (a)$ to the point $q=\gamma (b)$.
 Similarly,
 \begin{equation} \label{transport1}
\begin{array}{cccc}
P_a^b(\gamma)\colon &(T_{\gamma (a)}M )^{\perp}&\rightarrow &(T_{\gamma (b)}M)^{\perp}
\smallskip
\\
&Z(a)&\mapsto &Z(b)
\end{array}
\end{equation}
is called the \emph{normal parallel translation} of $Z_0$ along $\gamma$.

Both,  tangent and the normal parallel translations are linear isometries. Consequently, tangent (respectively normal) parallel translation of a tangent (respectively normal) frame  gives a tangent (respectively normal) parallel frame field along $\gamma$. An absolutely continuous curve $t\mapsto \gamma (t) $ in $M$ is a {\it geodesic} if its velocity vector field is parallel along $\gamma$, i.e., $\nabla_{\dot{\gamma}}\dot\gamma(t)=0$ for almost all $t$.  In pseudo-Riemannian geometry  there are three types of geodesics, determined by the causal character of the initial velocity vector. More specifically, $\gamma$ is a spacelike geodesic (respectively, timelike or null) if $\dot{\gamma}(0)$ is spacelike (respectively, timelike or null).
 The theory of pseudo-Riemannian geometry guarantees that a geodesic starting at $p_0$ with initial velocity $V_0$ is locally unique.


\section{Rolling submanifolds of pseudo-Euclidean\\spaces}\label{sec:Def}


The present section is devoted to the geometrical formulation of the rolling of a pseudo-Riemannian manifold $M$ over another $\widehat{M}$, while both are embedded into a pseudo-Euclidean space. Since any pseudo-Riemannian manifold may be globally isometrically embedded in a pseudo-Euclidean space, see~\cite{Clarke}, we assume that the manifolds $M$ and $\widehat{M}$ are connected, have the same dimension $m$, index $\mu$, and are both embedded in some $\dR_\nu^n$, which is the vector space $\dR ^n$ endowed with the pseudo-Riemannian metric induced by matrix $J=\diag (-I_{\nu}, I_{n-\nu})$. That is, for any vectors $x,y \in \dR ^n$, $\langle x,y\rangle_J=x^{\mathbf t}Jy$, where $x^{\mathbf t}$ is $x$ transposed. We identify the abstract manifolds $M$ and $\widehat{M}$ with their images under this  embedding. A rolling motion of $M$ over $\widehat{M}$ is a rigid motion inside  $\overline{M}=\dR_\nu^n$ and as such it is described by the action of the group of isometries of $\dR_\nu^n$, which is known (see, for instance, \cite[p. 240]{ONeill}) to be $\overline{\mathbb G}=\dR_{\nu} ^n\rtimes \Orth_{\nu} (n)$, where $\Orth_{\nu} (n)$ is the pseudo-orthogonal group
$$\Orth_{\nu} (n)=\{ X\in
GL(n)|\,\,X^{\mathbf t}JX=J\}.
$$
 The group $\overline{\mathbb G}$ is known as the pseudo-Euclidean group. It follows from the definition that all matrices in $\Orth_{\nu} (n)$ have determinant equal to $\pm 1$. Elements  in $\overline{\mathbb G}$ can be represented by pairs $(s,A)$, multiplication is defined as $(s_1,A_1)(s_2,A_2)=(s_1+A_1s_2,A_1A_2)$ and $(s,A)^{-1}=(-A^{-1}s,A^{-1})$. The action of $\overline{\mathbb G}$ on $\dR_\nu^n$ is defined by $(s,A)x=s+Ax$, for any vector $x\in \dR_\nu^n$.
 In the case $\nu=0$, we have the Riemannian situation and the group of isometries is the Euclidean group of rigid motions in $\dR ^n$. Let us concentrate for a while on the group $\Orth_{\nu} (n)$, for $\nu\neq 0$ and an arbitrary~$n$. A matrix  $A\in\Orth_{\nu} (n)$ can be written in block form as
\begin{equation*}\label{blocos}
A=\left[\begin{array}{c|c}
                     A_T&  B
                     \\
                     \hline
                     C  & A_S
                   \end{array}
\right],
\end{equation*}
where $A_T$ and $A_S$ are invertible matrices of order $\nu $  and  $n-\nu $ respectively. An element $A \in \Orth_{\nu} (n)$ preserves (reverses) time orientation provided that $\det(A_T)>0$ ($<0$), and preserves (reverses) space orientation provided that  $\det(A_S)>0$ ($<0$). $\Orth_{\nu} (n)$ can then be split into four disjoint sets $\Orth_{\nu} ^{++}(n)$, $\Orth_{\nu} ^{+-}(n)$, $\Orth_{\nu} ^{-+}(n)$, and $\Orth_{\nu} ^{--}(n)$, indexed by the signs of the determinants of $A_T$ and $A_S$, in this order. The following three disconnected subgroups of $\Orth_{\nu} (n)$ play an important role in orientability of pseudo-Riemannian manifolds:
\begin{equation}
\Orth_{\nu} ^{++}(n)\cup \Orth_{\nu} ^{--}(n),\quad \Orth_{\nu} ^{++}(n)\cup \Orth_{\nu} ^{+-}(n),\quad \Orth_{\nu} ^{++}(n)\cup \Orth_{\nu} ^{-+}(n).
\end{equation}
According to \cite{ONeill}, if we denote these groups by a common $G$,  there are three types of $G$\emph{-orientation}:
\begin{equation}\label{G-orientation}
\begin{array}{l}
 \mbox{orientation if $G=\Orth_{\nu} ^{++}(n)\cup \Orth_{\nu} ^{--}(n)$;}\\
 \mbox{time-orientation if $G=\Orth_{\nu} ^{++}(n)\cup \Orth_{\nu} ^{+-}(n)$;}\\
\mbox{space-orientation if $G=\Orth_{\nu} ^{++}(n)\cup \Orth_{\nu} ^{-+}(n)$.}
\end{array}
\end{equation}
The connected component containing the identity is $\Orth_{\nu} ^{++}(n)$ preserves time  orientation, space orientation, and the orientation of the manifold.
If $V$ is a vector space and $e=\{e_1, \cdots , e_n\}$ and $f=\{f_1, \cdots , f_n\}$ are two orthonormal bases for $V$, the relation $f_j=\sum_ia_{ij}e_i, \quad 1\leq j\leq n$, defines a matrix $A=(a_{ij})\in \Orth_{\nu} (n)$. The bases $e$ and $f$ are $G$\emph{-equivalent} if $A\in G\subset\Orth_{\nu}(n)$. For each $G$ there are two possible $G$-orientations of $V$. A $G$-orientation of a pseudo-Riemannian manifold is a function $\lambda_M$ that assigns to each $x\in M$ a smooth $G$-orientation of $T_xM$, in the sense that there is a coordinate system whose induced local $G$-orientation agrees with $\lambda_M$  on some neighborhood of $x\in M$. $M$ is said to be $G$-orientable provided it admits a $G$-orientation. More details about the orientation of pseudo-Riemannian manifolds can be found in~\cite{ONeill}.

The Lie algebra of  $\Orth_{\nu} (n)$, equipped with the Lie bracket defined by the commutator,   is the set
$$\orth_{\nu} (n)=\{ \mathcal A\in
{\cal G}(n)|\,\, \mathcal A^{\mathbf t}J=-J\mathcal A\}.$$
We are now ready to generalize the classical definition of a rolling motion, as  given in~\cite{GGLM2}, which is an adaptation of the Euclidean definition in~\cite{Sharpe}. In  the present case, the special Euclidean group is replaced by the pseudo-Euclidean group, orthogonality is understood with respect to the pseudo-Riema\-nni\-an metric, and the orientability condition varies according to the choice of one of the three subgroups of $G$. So, the following definition is indexed by the choice of one of the subgroups $G$,  in~\eqref{G-orientation} above, further denoted by $G_{\nu}(n)$.
Recall that  the pseudo-Riemannian manifolds $M$ and $\widehat{M}$ are assumed to have the same dimension $m$ and the same index $\mu$ (not necessarily the same as the embedding space) and both of them are $G$-oriented.

\begin{definition} \label{rolling-classical}
A $G$-rolling of $M$ on $\widehat{M}$ without slipping or twisting is an absolutely continuous curve $(x,g):~[0,\tau] \to M \times \dR_{\nu} ^n\rtimes G_{\nu}(n)$ satisfying the following conditions:
\begin{itemize}
\item[(i)] $\widehat{x}(t) := g(t)\, x(t) \in \widehat{M}$ for almost every $t$,
\item[(ii)] $d_{x(t)}g(t)\, T_{x(t)}M = T_{\widehat{x}(t)} \widehat{M}$ for almost every $t$,
\item[(iii)] $d_{x(t)} g(t)|_{T_{x(t)}M}:T_{x(t)} M \to T_{\widehat{x}(t)} \widehat{M}$ preserves $G$-orientation.
\item[(iv)] No slip condition: $\dot{\widehat{x}}(t)= d_{x(t)}g(t)\, \dot{x}(t),$ for almost every $t$.
\item[(v)] No twist condition $($tangential part$)$:
$$d_{x(t)}g(t)\, \frac{D}{dt}\, Z(t) = \frac{D}{dt}\, d_{x(t)}g(t)\, Z(t),$$
for any tangent vector field $Z(t)$ along $x(t)$ and almost every $t$.
\item[(vi)] No twist condition $($normal part$)$:
$$d_{x(t)}g(t)\,\frac{D^{\perp}}{dt}\, \Psi(t) = \frac{D^{\perp}}{dt}\, d_{x(t)}g(t)\, \Psi(t),$$
for any normal vector field $\Psi(t)$ along $x(t)$ and almost every $t$.
\end{itemize}
\end{definition}

 The curve $x$ is called the \emph{rolling curve}, while $\widehat{x}$ is called the {\it development} of $x$ on $\widehat{M}$. Note that, due to the splitting (\ref{splitting}), the condition  (ii) implies that $d_{x(t)}g(t)\, T^{\perp}_{x(t)}M = T^{\perp}_{\widehat{x}(t)} \widehat{M}$.

The no twist conditions (v) and (vi) have an equivalent formulation involving the notion of parallel vector fields.
\begin{proposition}\label{new-twist}
Assume that condition  {\rm(ii)} holds. Then, conditions {\rm(v)} and {\rm(vi)} are respectively equivalent to:
 \begin{itemize}
\item[(v')] A vector field $Z(t)$ is tangent parallel along $x(t)$ if and only if $d_{x(t)}g(t)\, Z(t)$ is   tangent parallel along $\widehat{x}(t)$;
    \item[(vi')] A vector field $\Psi(t)$    is normal parallel along $x(t)$ if and only if  $d_{x(t)}g(t)\, \Psi(t)$ is   normal parallel along $\widehat{x}(t)$.
 \end{itemize}
 \end{proposition}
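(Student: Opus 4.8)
The plan is to treat the two no-twist conditions in parallel, since the argument for (v)$\Leftrightarrow$(v$'$) and (vi)$\Leftrightarrow$(vi$'$) is formally identical once one reads ``$\frac{D}{dt}$'' (resp. ``$\frac{D^\perp}{dt}$'') in the appropriate bundle. I would first record the key structural fact that makes (ii) relevant: for each $t$, the linear map $d_{x(t)}g(t)$ restricts to an isometry $T_{x(t)}M \to T_{\widehat x(t)}\widehat M$ and, by the remark following Definition~\ref{rolling-classical} together with the splitting~\eqref{splitting}, also to an isometry $T^\perp_{x(t)}M \to T^\perp_{\widehat x(t)}\widehat M$. In particular, for a tangent vector field $Z(t)$ along $x(t)$, the map $t\mapsto d_{x(t)}g(t)Z(t)$ is a well-defined tangent vector field along $\widehat x(t)$ (and similarly in the normal bundle for $\Psi$); this is precisely what is needed for the right-hand sides of (v) and (vi) to make sense, and it is what condition (ii) buys us.

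Next I would prove the implications (v)$\Rightarrow$(v$'$) and (vi)$\Rightarrow$(vi$'$). Assume (v). If $Z(t)$ is tangent parallel along $x(t)$, then $\frac{D}{dt}Z(t)=0$ for almost every $t$, so by (v) we get $\frac{D}{dt}\big(d_{x(t)}g(t)Z(t)\big)=d_{x(t)}g(t)\frac{D}{dt}Z(t)=0$ for almost every $t$, i.e. $d_{x(t)}g(t)Z(t)$ is tangent parallel along $\widehat x(t)$. For the converse direction of the ``if and only if'' in (v$'$), I use that $d_{x(t)}g(t)$ is, for each $t$, a linear isomorphism from $T_{x(t)}M$ onto $T_{\widehat x(t)}\widehat M$: if $d_{x(t)}g(t)Z(t)$ is tangent parallel then $0=\frac{D}{dt}\big(d_{x(t)}g(t)Z(t)\big)=d_{x(t)}g(t)\frac{D}{dt}Z(t)$ a.e., and applying the pointwise inverse gives $\frac{D}{dt}Z(t)=0$ a.e., so $Z(t)$ is tangent parallel. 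The normal case is identical, replacing $\frac{D}{dt}$ by $\frac{D^\perp}{dt}$, $T$ by $T^\perp$, ``tangent'' by ``normal'', and invoking the normal part of the remark after Definition~\ref{rolling-classical}.

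The main work is the reverse implications (v$'$)$\Rightarrow$(v) and (vi$'$)$\Rightarrow$(vi). Here the idea is to reduce the general vector-field identity to the parallel case by a frame argument. Fix $t_0$ and, using the Lemma quoted from~\cite{Lee}, choose a tangent parallel frame field $E_1(t),\dots,E_m(t)$ along $x(t)$; by (v$'$) the fields $\widehat E_i(t):=d_{x(t)}g(t)E_i(t)$ form a tangent parallel frame along $\widehat x(t)$. Write an arbitrary tangent vector field as $Z(t)=\sum_i f_i(t)E_i(t)$ with absolutely continuous coefficients $f_i$. Then $\frac{D}{dt}Z=\sum_i \dot f_i E_i$ because the $E_i$ are parallel, while $d_{x(t)}g(t)Z(t)=\sum_i f_i(t)\widehat E_i(t)$, whose covariant derivative is $\sum_i \dot f_i \widehat E_i$ since the $\widehat E_i$ are parallel; comparing, $\frac{D}{dt}\big(d_{x(t)}g(t)Z\big)=\sum_i\dot f_i\widehat E_i=d_{x(t)}g(t)\sum_i\dot f_i E_i=d_{x(t)}g(t)\frac{D}{dt}Z$, which is exactly (v). The normal case runs the same way with a normal parallel frame, whose existence is guaranteed by the second part of the Lemma and whose image under $d_{x(t)}g(t)$ is a normal parallel frame along $\widehat x(t)$ by (vi$'$). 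The one point that needs care, and which I expect to be the only real obstacle, is the measure-theoretic bookkeeping: the curve is merely absolutely continuous, so parallel frames and coefficient functions are absolutely continuous and the covariant-derivative identities hold only almost everywhere; one must check that the exceptional null sets (where $(x,g)$ fails to be differentiable, where parallelism fails, where the coefficient functions fail to be differentiable) can be combined into a single null set outside of which all the pointwise-linear-algebra manipulations above are valid. This is routine but should be stated explicitly. Finally, one notes that (v$'$) and (vi$'$) as written already incorporate condition (ii) in the guise of ``$d_{x(t)}g(t)Z(t)$ is a tangent (resp. normal) vector field along $\widehat x(t)$,'' so no extra hypothesis is needed there.
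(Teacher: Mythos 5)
Your proposal is correct and follows essentially the same route as the paper: the forward implication uses the invertibility of $d_{x(t)}g(t)$ guaranteed by (ii), and the reverse implication expands an arbitrary vector field in a tangent (resp.\ normal) parallel frame whose image is parallel by (v$'$) (resp.\ (vi$'$)) and compares coefficients. Your extra remarks on the almost-everywhere bookkeeping and on spelling out the pointwise inverse are refinements the paper leaves implicit, but the substance of the argument is the same.
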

\begin{proof}
 Assume that (ii) holds. That is, $d_{x(t)}g(t)$ is a linear isomorphism between $T_{x(t)}M$ and $T_{\widehat{x}(t)}\widehat{M}$. We prove the equivalence between (v) and (v'). The proof of the equivalence of (vi)  and (vi') can be done similarly.

  First, assume that (v) also holds. Then, it is obvious that  $\frac{D}{dt}\, Z(t)=0$ if and only if $\frac{D}{dt}(d_{x(t)}g(t)Z(t))=0$. So, (v) $\Rightarrow $ (v').

 To prove that (v') $\Rightarrow $ (v), let $Z(t)$ be any tangent vector field along $x(t)$ and  $\{e_1(t),\cdots e_m(t)\}$ a parallel tangent frame field along $x(t)$, so that $$Z(t)=\sum_iz_i(t) e_i(t)\quad  \mbox{and} \quad \frac{D}{dt}\,Z(t)=\sum_i\dot{z}_i(t) e_i(t).$$ If $\widehat{e}_i(t):=d_{x(t)}g(t)e_i(t)$, then we can guarantee by assumption that the frame \linebreak $\{\widehat{e}_1(t),\cdots \widehat{e}_m(t)\}$ is also parallel along $\widehat{x}(t)$. So
 $$
 d_{x(t)}g(t)(\frac{D}{dt} \, Z(t))=\sum_i\dot{z}_i(t)d_{x(t)}g(t)e_i(t)=\sum_i\dot{z}_i(t)\widehat{e}_i(t),
 $$
and
  $$\frac{D}{dt} (d_{x(t)}g(t)\, Z(t))=\frac{D}{dt}\, \Big(\sum_iz_i(t) \widehat{e}_i(t)\Big)=\sum_i\dot{z}_i(t)\widehat{e}_i(t).
  $$
  Consequently, $$
 d_{x(t)}g(t)(\frac{D}{dt} \, Z(t))=\frac{D}{dt} (d_{x(t)}g(t)\, Z(t)),
 $$
 proving that (v') $\Rightarrow $ (v).
\end{proof}

 As a consequence of these equivalences, one can replace in Definition \ref{rolling-classical} conditions (v)-(vi) by conditions (v')-(vi').

We also note that for manifolds of dimension one (v') is automatically satisfied, while for embeddings of codimension one (vi') holds automatically.

From now on, whenever we write ``a rolling'' we mean  ``a $G$-rolling without slipping or twisting''.

The following example  of rolling an $m$-dimensional Lorentz\-ian sphere on the affine tangent space at a point $x_0$, both embedded in the pseudo-Euclidean space $\dR_1^{m+1}$, is taken from \cite{KLF}. For the sake of completeness, we work the details here using Definition~\ref{rolling-classical}. We also use this example as a benchmark for several properties that will be proved in a more general context in later sections.


\subsection{A benchmark example - the Lorentzian sphere $S_1^{m}$ rolling over the affine tangent space} \label{Lorentzian sphere}


Let $\overline{M}=\dR_1^{m+1}$,  $M=S_1^{m}=\big\{x\in\dR_1^{m+1}\colon \langle x,x\rangle_J=1\big\}$, with  $J=\diag(-I_1,I_m)$, and the affine tangent space $\widehat{M}=T_{x_0}^{\aff}S_1^{m}$, for some $x_0\in S_1^{m}$. The Lie algebra of the group $G_{1}(m+1)$ is denoted by $\mathfrak g_{1}(m+1)$. The following are easy to check or they are consequence of definitions.
\begin{enumerate}
\item  $T_{x_0}S_1^{m}=\big\{v\in\dR_1^{m+1}\colon v=\Omega x_0,\, \Omega\in\mathfrak g_{1}(m+1)\big\};$
  \item $T^{\aff}_{x_0}S_1^{m}=\big\{v\in\dR_1^{m+1}\colon v=x_0+\Omega x_0,\;\;\Omega\in\mathfrak g_{1}(m+1)\big\} ;$
   \item $T_{x_0}^{\perp}S_1^{m}=\mbox{span}\{ x_0\};$
  \item ${\rm Ad}_R(\Omega)=R\,\Omega R^{-1} \in \mathfrak g_{1}(m+1)$, for every $R \in G_1(m+1)$ and $\Omega \in \mathfrak g_{1}(m+1)$;
  \item $\langle .\,,.\rangle_J$ is $G_1(m+1)$-invariant.
  \item The Lie group $G_1(m+1)$ acts transitively on $S_1^{m}$, consequently any curve $t\mapsto x(t)$ satisfying $x(0)=x_0$ is of the form $x(t)=R(t)x_0$, for some $R(t)\in G_1(m+1)$ satisfying the conditions $R(0)x_0=x_0$. If, in particular, $R(0)=I_{m+1}$, then $R(t)$ is a curve in $O^{++}_1(m+1)$.
\end{enumerate}


\subsubsection{\textbf{Kinematic equations for rolling the Lorentzian sphere}}


Let $t\mapsto u(t)$ be an absolutely continuous function satisfying $\langle u(t),x_0\rangle_J=0$ and $t\mapsto (s(t),R(t))\in \overline{\mathbb G}=\dR_{1} ^{m+1}\rtimes G_1(m+1)$ a curve in $\overline{\mathbb G}$, satisfying $(s(0),R(0))=(0,I_{m+1})$, with velocity vector (whenever defined) given by
\begin{equation}\label{Kinematic-Lorentz}
  \begin{array}{lcl}
\dot{s}(t) &=& u(t),\\
\dot{R} (t)&=&R(t)\left(u(t)x_{0}^{\mathbf t}-x_{0}u^{\mathbf t}(t)\right)\, J.
  \end{array}
\end{equation}
We prove that $t\mapsto (x(t),g(t))\in S_1^{m}\times \overline{\mathbb G} $, where $ x(t)=R(t)x_0$ and $g(t)=(s(t), R^{-1}(t))$, $R(t)\in O^{++}_1(m+1)$ is a rolling of $S_1^{m}$ over $T^{\aff}_{x_0}S_1^{m}$, by showing that the first five conditions in Definition~\ref{rolling-classical} hold. Equations~\eqref{Kinematic-Lorentz} are called the~\emph{kinematic equations} for rolling the Lorentzian sphere over the affine tangent space at the point $x_0$. Condition (vi) is automatically satisfied since this is a co-dimension one case.

\noindent \emph{Proof of {\rm(i)}.} We have $\widehat{x}(t):=g(t)x(t)=s(t)+R^{-1}(t)x(t)=s(t)+x_0$.
Since $\dot{s}(t) = u(t) \in T_{x_0}S_1^{m}$ and $s(0)=0$, then $s(t) \in T_{x_0}S_1^{m}$ and $\widehat{x}(t)=s(t)+x_0 \in T^{\aff}_{x_0}S_1^{m}$.

\noindent \emph{Proof of {\rm(ii)}.}   Elements in $T_{x(t)}S_1^{m}$ are of the form $\Omega (t) x(t)$, with $\Omega (t) \in \mathfrak g_1(m+1)$. So,
\begin{equation*}
\begin{array}{lcl}d_{x(t)}g(t)(\Omega (t) x(t))&=&R^{-1}(t) \Omega (t) x(t)\\
&=&\underbrace{R^{-1}(t) \Omega (t) R(t)}_{\in \mathfrak g_1(m+1)}x_0 \in T_{x_0}S_1^{m}.
\end{array}\end{equation*}
Since $T_{\widehat{x}(t)}(T^{\aff}_{x_0}S_1^{m})$ is identified with $T_{x_0}S_1^{m}$,  the result follows.

\noindent \emph{Proof of {\rm(iii)}.}  The map $d_{x(t)}g(t)=R^{-1}(t)\colon T_{x(t)}S_1^{m}\to T_{\widehat{x}(t)}\widehat{M}$ is linear for all $t$ whenever it is defined. Since $R(t)$ is a continuous curve in $G_1(m+1)$ and $R(0)=I_{m+1}$, $R(t)$ and its inverse must remain in the connected component containing the identity of $G_1(m+1)$, which is $\Orth^{++}_1(m+1)$, so keeping the sign of the determinant for all $t$ that guarantees that $d_{x(t)}g(t)$ is orientation preserving. 

\noindent \emph{Proof of {\rm(iv)}.} We now have to use constraints on velocity given by~\eqref{Kinematic-Lorentz}.
  $$
  \begin{array}{lcl}
  d_{x(t)}g(t)\dot{x}(t)&=&R^{-1}(t)\dot{x}(t)=R^{-1}(t)\dot{R}(t)x_0\\
  &=& (u(t)x_{0}^{\top}-x_{0}u^{\top}(t))\, J \, x_0\\
  &=&\langle x_0,x_0\rangle_J u(t)- \langle u(t),x_0
\rangle_J x_0  =u(t).
  \end{array}
  $$
On the other hand, $\widehat{x}(t)=s(t)+x_0$ from the proof of (i). So
  $
  \dot{\widehat{x}}(t)=\dot{s}(t)=u(t)
  $,
and the identity in (iv) holds.

 \noindent \emph{Proof of {\rm(v)}.}  The covariant derivative of a tangent vector field $Z(t)$ along $x(t)$ is a tangent vector field  along $x(t)$ that results from orthogonal projection of the extrinsic derivative $\dot{Z}(t)$  on the tangent space $T_{x(t)}S_1^{m}$. That is,
      \small{$\frac{D}{dt}\, Z(t)=\dot{Z}(t)-\langle \dot{Z}(t),x(t)
\rangle_J\, x(t)$.} So,
 \small{ \begin{align*}
  d_{x(t)}g(t)\frac{D}{dt}\, Z(t)&= R^{-1}(t)\big(\dot{Z}(t)-\langle \dot{Z}(t),x(t)\rangle_J\, x(t)\big)
 \\
 & =R^{-1}(t)\dot{Z}(t)-\langle R^{-1}(t)\dot{Z}(t),R^{-1}(t)x(t)\rangle_J\, R^{-1}(t)x(t)
 \\
  &=R^{-1}(t)\dot{Z}(t)-\langle R^{-1}(t)\dot{Z}(t),x_0\rangle_J\, x_0.
  \end{align*}}
On the other hand, since $$d_{x(t)}g(t)\, Z(t)= R^{-1}(t)Z(t)\in T_{\widehat{x}(t)}\widehat{M}\cong T_{x_0}S_1^{m},$$ we have
  \begin{align*}
  \frac{D}{dt}d_{x(t)} &g(t)\, Z(t)= \frac{D}{dt}R^{-1}(t)Z(t)
  \\
  &=\dot{R^{-1}}(t)Z(t)+R^{-1}(t)\dot{Z}(t)-\langle \dot{R^{-1}}(t)Z(t)+R^{-1}(t)\dot{Z}(t),x_0\rangle_Jx_0
  \\
 & =R^{-1}(t)\dot{Z}(t)-\langle R^{-1}(t)\dot{Z}(t),x_0\rangle_J\, x_0
  \\
  &+\dot{R^{-1}}(t)Z(t)-\langle \dot{R^{-1}}(t)Z(t),x_0\rangle_Jx_0.
  \end{align*}
So, in order to prove  (v) we have to show that the sum of the last two terms in the previous expression equals $0$. For this, take into consideration that $Z(t)=\Omega(t)x(t)= \Omega(t)R(t)x_0$, for some $\Omega(t)\in \mathfrak g_1(m+1)$, and $\dot{R^{-1}}=-R^{-1}\dot{R}R^{-1}$, to obtain
  \begin{align*}
 \dot{R^{-1}}(t)&Z(t) =-R^{-1}(t)(u(t)x_{0}^{\mathbf t}-x_{0}u^{\mathbf t}(t)) JR^{-1}(t)\Omega(t)R(t)x_0
 \\
& =-\underbrace{\langle x_0,R^{-1}(t)\Omega(t)R(t)x_0\rangle_J}_{=0}\, u(t)+\langle u(t),R^{-1}(t)\Omega(t)R(t)x_0\rangle_J\, x_0
\\
& =\langle u(t),R^{-1}(t)\Omega(t)R(t)x_0\rangle_J\, x_0,
 \end{align*}
and, consequently,
  \begin{align*}
 \langle \dot{R^{-1}}(t)Z(t),x_0\rangle_Jx_0 & = \langle u(t),R^{-1}(t) \Omega(t)R(t)x_0\rangle_J\, \langle x_0,x_0\rangle_J\, x_0
 \\
 &= \langle u(t),R^{-1}(t)\Omega(t)R(t)x_0\rangle_J\, x_0=\dot{R^{-1}}(t)Z(t),
 \end{align*}
completing the proof of (v).


 \subsubsection{\textbf{Rolling versus parallel translation}} \label{Rolling versus parallel transport}

 We show that parallel translation of a given vector $Y_0$ along a curve in $ S_1^{m}$ can be realized by using the rolling along that curve.

More precisely, we show that if $x(t)=R(t)x_0$ is a rolling curve satisfying the initial condition $x(0)=x_0$, rolling map $g(t)=(s(t), R^{-1}(t))$ with $g(0)=\big(0,I_{m+1}\big)$, and $Y_0 \in T_{x_0}S_1^{m}$, then
 $Y(t)=R(t)Y_0 $ is the unique tangent parallel vector field along $x(t)$ satisfying $Y(0)=Y_0$. Similarly, if $\Psi_0 \in T^{\perp}_{x_0}S_1^{m}$, then
 $\Psi(t)=R(t)\Psi_0 $ is the unique normal parallel vector field along the curve $x(t)$ satisfying~$\Psi(0)=\Psi_0$.

 To prove the first statement, we notice that if $Y_0 \in T_{x_0}S_1^{m}$, then
 $$\langle Y(t),x(t)\rangle_J=\langle R(t)Y_0,R(t)x_0\rangle_J= \langle Y_0,x_0\rangle_J=0,\quad\Longrightarrow\quad Y(t)\in T_{x(t)}S_1^{m}.
 $$
 We now have to show that $\frac{DY}{dt}=0$, where, in this case, $\frac{DY(t)}{dt} =\dot{Y}(t)-\langle \dot{Y}(t), x(t)\rangle_Jx(t)$. Using the second kinematic equation in~\eqref{Kinematic-Lorentz} and the conditions $\langle Y_0,x_0\rangle_J=0$, $\langle x_0,x_0\rangle_J=1$, we may conclude after simplifications that
 $$
 \dot{Y}(t)=\dot{R}(t)Y_0=-\langle u(t),Y_0\rangle_J\, R(t)x_0;
 $$
and
$$
\langle \dot{Y}(t), x(t)\rangle_Jx(t)=-\langle u(t),Y_0\rangle_J\, R(t)x_0.
$$
So,  $\frac{DY}{dt}=0$, i.e. $Y(t)=R(t)Y_0$ is the unique parallel vector field along $x(t)$ satisfying $Y(0)=Y_0$.

For the second statement, notice that if $\Psi_0 \in T^{\perp}_{x_0}S_1^{m}$, then $\Psi_0=kx_0$, for some $k\in \mathbb{R}$, and consequently $\Psi(t)=R(t)\Psi_0=kx(t)\in T^{\perp}_{x(t)}S_1^{m}$. So, in this case, using similar arguments and the fact that $\langle u(t),x_0\rangle_J=0$, one has
$$
\langle \dot{\Psi}(t), x(t)\rangle_J=\langle \dot{R}(t)\Psi_0, x(t)\rangle_J=\langle \dot{R}(t)\Psi_0, R(t)x_0\rangle_J=k\langle u(t),x_0\rangle_J=0.
$$
Consequently, $\frac{D^{\perp}\Psi}{dt} =\langle \dot{\Psi}(t), x(t)\rangle_Jx(t)=0$, for almost all $t$, that is, $\Psi$ is the unique normal parallel vector field along $x(t)$ satisfying $\Psi(0)=\Psi_0$.


\subsubsection{\textbf{Causality}}\label{causality}


For the Lorentzian sphere, it can easily be shown that the rolling curve and its development have the same causal character. Indeed, using results from the previous subsection, namely $x(t)=R(t)x_0$, $\widehat{x}(t)=s(t)+x_0$, $\langle x_0,x_0\rangle_J=1$, $\langle u(t),x_0\rangle_J=0$, and the kinematic equations~\eqref{Kinematic-Lorentz}, we can write
$$
\begin{array}{lcl}
\langle \dot{\widehat{x}}(t),\dot{\widehat{x}}(t)\rangle_J&=&\langle \dot{s}(t),\dot{s}(t)\rangle_J=\langle u(t),u(t)\rangle_J;\\
\langle \dot{x}(t),\dot{x}(t)\rangle_J&=&\langle \dot{R}(t)x_0,\dot{R}(t)x_0\rangle_J\\
&=&\langle (u(t)x_0^{\mathbf t}-x_0u^{\mathbf t}(t))Jx_0,(u(t)x_0^{\mathbf t}-x_0u^{\mathbf t}(t))Jx_0\rangle_J\\
&=&\langle u(t),u(t)\rangle_J.
\end{array}
$$

Further we want to show that the curve $t\in I\to R(t)\in \Orth_1(m+1)$ also has the same causal character, with respect to a scalar product in ${\cal G}(n)$ defined below. First, for any matrix $\mathcal A\in {\cal G}(n)$ and $J=\diag(-I_{\nu},I_{n-\nu})$ the Gram matrix,
define the matrix $\mathcal A^J$ by $$ \mathcal A^J:=J\mathcal A^{\mathbf t}J.$$
 ${\cal G}(n)$ may be equipped with a scalar product $\langle\langle .,.\rangle\rangle_J$ of signature $\nu $, defined by
 $\langle\langle \mathcal A,\mathcal B\rangle\rangle_J=\tr{(A^JB)}$. This is positive-definite only for $\nu = 0$. We say that non-zero element $\mathcal A\in {\cal G}(n)$ is timelike if $\langle\langle  \mathcal A, \mathcal A\rangle\rangle_J<0$, it is spacelike if $\langle\langle \mathcal A, \mathcal A\rangle\rangle_J>0$ and it is null if $\langle\langle  \mathcal A, \mathcal A\rangle\rangle_J=0$. The zero element is declared to be spacelike.

Notice that for  $\mathcal A \in {\cal G}(n)$
$$
(\mathcal A^J)^J=\mathcal A,\quad \mbox{and} \quad (\mathcal A\mathcal B)^J=\mathcal B^J\mathcal A^J.
$$
Moreover, if $\mathcal A\in O_{\nu}(n)$, then $\mathcal A^J\mathcal A=\mathcal A\mathcal A^J=\Id$, which implies $\mathcal A^J=\mathcal A^{-1}$.

 We can say that the Lie algebra $\mathfrak{o}_{\nu}(n)$ consists of $(n\times n)$ matrices satisfying $\mathcal A=-\mathcal A^J=-J\mathcal A^{\mathbf t}J$. Consequently, for $\mathcal A\in \mathfrak{o}_{\nu}(n)$, one has
 $$
\mathcal A^J\mathcal A=\mathcal A\mathcal A^J=-\mathcal A^2.
$$
Also, elements in $\mathfrak{o}_{\nu}(n)$ can be written as
$$
\mathcal A=\left(
\begin{array}{cccc}
a_{\nu} & b
& \\
b^{\mathbf t}  &a_{n-\nu}
\end{array}
\right),\qquad a_{\nu}\in\mathfrak{o}(\nu),\quad a_{n-\nu}\in \mathfrak{o}(n-\nu).
$$
So,
\begin{align*}
\langle\langle \mathcal A, \mathcal A\rangle\rangle_J  =\tr(\mathcal A^J\mathcal A) & =-\tr(\mathcal A^2)
\\
&=
\tr\left(
\begin{array}{cccc}
-a_{\nu}^2 & 0
\\
0  &-a_{n-\nu}^2
\end{array}
\right)-2\tr (bb^{\mathbf t}).
\end{align*}
As we see, the first term involving the  skew symmetric matrices $a_{\nu}$ and  $a_{n-\nu}$ is always positive and represents the spacelike part. The matrix $b$ is responsible for the timelike character of elements of the Lie algebra.

We transfer this causal structure to the curves on the group $\Orth_{\nu}(n)$.
Let $A\colon I\to \Orth_{\nu}(n)$ be a smooth curve. We say that the curve $A$ is spacelike, timelike or null if the product $\langle\langle\dot A,\dot A\rangle\rangle_J$ is positive, negative or equals zero, respectively. It can easily be checked that the scalar product $\langle\langle .\,,.\rangle\rangle_J$ is $\Orth_{\nu} (n)$-invariant.

So, using the same ingredients as before and the kinematic equations~\eqref{Kinematic-Lorentz}, we conclude that for the rolling of $S^{m}_1$ on the affine tangent space one get
\begin{equation*}
\langle\langle\dot{R}(t),\dot{R}(t)\rangle\rangle_J=2\langle u(t),u(t)\rangle_J.
\end{equation*}


\subsubsection{\textbf{Controllability}}

We now want to introduce the issue of controllability for this rolling system. The vector function $u(t)$ in the kinematic equations~\eqref{Kinematic-Lorentz} is a control function. The choice of the controls defines the rolling curve. It has been proved in~\cite{KL} that the kinematic equations~\eqref{Kinematic-Lorentz} are completely controllable in  $\dR_{\nu} ^{m+1}\times \Orth_{\nu}^{++}(m+1)$. More general results about sufficient conditions that guarantee the controllability of the rolling process can be found in~\cite{ChKok,Grong}.

However, nothing guarantees that the causal character of the velocity vector remains invariant. The presence of the causal structure arises the natural problem to describe the set $\mathcal R_{x_0}\subset M$ of points reachable by a timelike (spacelike or null) curve from a given point $x_0\in M$. By this we mean that the sign of $\langle\dot{x}(t),\dot{x}(t) \rangle_J$ remains negative (positive or zero), for those $t>0$ where the velocity vector is defined. Before trying to answer this question we analyze a slightly different but simpler issue, that of geodesic reachability by rolling.

\begin{definition}\label{geo_access}
We say that a point $x_1\in M$ is \emph{geodesically reachable by rolling} from another point $x_0\in M$, if there exists a geodesic $x (t)=R(t)x_0$, with $R(t)$ a solution of the second kinematic equation in~\eqref{Kinematic-Lorentz}, satisfying $x (0)=x_0$, $x (t_1)=x_1$, for some $t_1>0$.
\end{definition}
Instead of geodesically reachable by rolling we may simply write geodesically reachable. Since geodesics preserve their causal character it is easier to describe the subset of $\mathcal R_{x_0}$ reachable by geodesics. For the Lorentzian sphere, we characterize the set of points that can be geodesically reachable from a generic point $x_0$. First, we recall from~\cite{KL} what are the geodesics in $ S_1^{m}$ generated by $\mathcal A=(ux_0^{\mathbf t}-x_0u^{\mathbf t})J$ with constant $u$ from the kinematic equations~\eqref{Kinematic-Lorentz}.
\begin{enumerate}
 \item[$\bullet$] If $\langle u,u\rangle_J=1$, then
 $x(t)=\exp(\mathcal At)x_0=x_0\cos(t)+u\sin(t)$
  is a spacelike geodesic satisfying  $x(0)=x_0$.
 \item[$\bullet$]If $\langle u,u\rangle_J=-1$, then
  $x(t)=\exp(\mathcal At)x_0=x_0\cosh(t)+u\sinh(t)$
  is a timelike geodesic satisfying $x(0)=x_0$.
\item[$\bullet$] If $\langle u,u\rangle_J=0$, then
   $x(t)=\exp(\mathcal At)x_0=x_0+ut$
   is a null geodesic satisfying $x(0)=x_0$.
\end{enumerate}
\begin{proposition}\label{geodesics1}
Let $x_0$ be any point in $  S_1^{m}$.  If $x_1\in S_1^{m}$ belongs to the set
$$
\{x\in S_1^{m}, \mbox{ such that } \langle x_0,x\rangle_J>-1\}\cup \{-x_0\},
$$
then $x_1$ is geodesically accessible from $x_0$.
\end{proposition}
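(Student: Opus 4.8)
\emph{Proof plan.} The plan is to read everything off from the explicit description of the geodesics of $S_1^m$ recalled just before the statement, and to hit the target point $x_1$ by one of them. Set $c:=\langle x_0,x_1\rangle_J$; since $x_0$ and $x_1$ are unit spacelike, $\langle x_0,x_0\rangle_J=\langle x_1,x_1\rangle_J=1$, and the hypothesis ``$\langle x_0,x_1\rangle_J>-1$ or $x_1=-x_0$'' reads ``$c>-1$, or $x_1=-x_0$ (where $c=-1$)''. I would then split into cases according to the value of $c$.

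In the three generic ranges the required tangent vector is forced: put $u:=(x_1-c\,x_0)/\kappa$ for a scalar $\kappa$ to be chosen. The only computation needed is the elementary expansion
$$
\langle u,x_0\rangle_J=\frac{1}{\kappa}(c-c)=0,\qquad
\langle u,u\rangle_J=\frac{1}{\kappa^2}\big(1-2c^2+c^2\big)=\frac{1-c^2}{\kappa^2},
$$
so $u\in T_{x_0}S_1^m$ automatically, and its causal type is governed by $\kappa$. Concretely: if $c\in(-1,1)$, choose $\theta\in(0,\pi)$ with $\cos\theta=c$ and $\kappa=\sin\theta>0$; then $\langle u,u\rangle_J=1$ and the spacelike geodesic $x(t)=x_0\cos t+u\sin t$ gives $x(\theta)=c\,x_0+(x_1-c\,x_0)=x_1$. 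If $c>1$, choose $t_1>0$ with $\cosh t_1=c$ and $\kappa=\sinh t_1>0$; then $\langle u,u\rangle_J=-1$ and the timelike geodesic $x(t)=x_0\cosh t+u\sinh t$ satisfies $x(t_1)=x_1$. If $c=1$ and $x_1\neq x_0$, then $v:=x_1-x_0\neq0$ has $\langle v,v\rangle_J=1-2+1=0$ and $\langle v,x_0\rangle_J=1-1=0$, so $v$ is a null tangent vector and the null geodesic $x(t)=x_0+tv$ reaches $x_1$ at $t=1$. In every case $x(0)=x_0$, the curve has the form $\exp(\mathcal A t)x_0$ with $\mathcal A=(ux_0^{\mathbf t}-x_0u^{\mathbf t})J$, and $t_1>0$, so Definition~\ref{geo_access} is met.

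It remains to treat $x_1=x_0$ (where $c=1$) and $x_1=-x_0$ (the only admissible point with $c=-1$). For both I would pick any unit spacelike $u\in T_{x_0}S_1^m$; such $u$ exists because $T_{x_0}S_1^m=x_0^{\perp}$ is a nondegenerate subspace of $\dR_1^{m+1}$ of dimension $m\ge 2$ which inherits index $1$ from the ambient space (as $x_0$ is spacelike), hence is Lorentzian. The closed spacelike geodesic $x(t)=x_0\cos t+u\sin t$ then satisfies $x(\pi)=-x_0$ and $x(2\pi)=x_0$, so both are geodesically reachable in positive time. Since $c>-1$ forces $c\in(-1,1)$, $c=1$, or $c>1$, the listed cases together with $x_1=-x_0$ exhaust $\{x\in S_1^m:\langle x_0,x\rangle_J>-1\}\cup\{-x_0\}$.

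I do not expect a genuine obstacle here: once the geodesics are in hand this is a short case check, the only recurring step being the expansion of $\langle x_1-c\,x_0,x_1-c\,x_0\rangle_J$. The two points needing attention are that the admissible set contains $x_0$ itself (so one must close a nonconstant geodesic at $t=2\pi$), and that the borderline value $c=1$ is handled by the null (straight-line) geodesic rather than a trigonometric or hyperbolic one. I would finish with a one-line remark that the restriction is sharp: along the three geodesic families $\langle x_0,x(t)\rangle_J$ takes values in $[-1,1]$ for the spacelike ones (equal to $-1$ only when $x(t)=-x_0$), in $[1,\infty)$ for the timelike ones, and identically $1$ for the null ones, so no point with $\langle x_0,x_1\rangle_J<-1$, and no point with $\langle x_0,x_1\rangle_J=-1$ other than $-x_0$, is geodesically reachable.
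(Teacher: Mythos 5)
Your proposal is correct and follows essentially the same constructive case analysis as the paper's proof: split on $c=\langle x_0,x_1\rangle_J$ and exhibit the spacelike, timelike or null geodesic generated by $u$ proportional to $x_1-c\,x_0$ (with an arbitrary unit spacelike $u\in T_{x_0}S_1^{m}$ when $x_1=-x_0$). The only difference is cosmetic: you handle the degenerate point $x_1=x_0$ explicitly via a closed spacelike geodesic and add a sharpness remark, both of which the paper leaves implicit.
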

\begin{proof}
The proof is constructive, in the sense that we construct the geodesic that realizes the job, according to the value of  $\langle x_0,x_1\rangle_J$.
\begin{enumerate}
  \item If $\langle x_0,x_1\rangle_J>1$, i.e., $\langle x_0,x_1\rangle_J=\cosh \theta$, for some $\theta\neq 0$, the timelike geodesic $x(t)=x_0\cosh(t)+u\sinh(t)$, where
  $u=\frac{x_1-x_0\cosh\theta}{\sinh\theta}$, links $x_0$ (at $t=0$) to $x_1$ (at $t=\theta$). It is a simple calculation to show that, in this case, $\langle u,u\rangle_J=-1$ and we conclude that such kind of points are timelike accessible by geodesics.

   \item If $\langle x_0,x_1\rangle_J=1$,  the null geodesic $x(t)=x_0+tu$, with
  $u=x_1-x_0$, links $x_0$ (at $t=0$) to $x_1$ (at $t=1$). In this case $\langle u,u\rangle_J=0$. Here we have example when $x_1$ is accessible by null geodesics.

   \item If $\langle x_0,x_1\rangle_J\in ]-1,1[$, i.e., $\langle x_0,x_1\rangle_J=\cos \theta$, for some $\theta\neq k\pi$, the spacelike geodesic $x(t)=x_0\cos(t)+u\sin(t)$, where
  $u=\frac{x_1-x_0\cos\theta}{\sin\theta}$, links $x_0$ (at $t=0$) to $x_1$ (at $t=\theta$). In this case, $\langle u,u\rangle_J=1$.

   \item If $x_1=-x_0$, any spacelike geodesic  $x(t)=x_0\cos(t)+u\sin(t)$, with $u$ satisfying $\langle x_0,u\rangle_J=0$, links $x_0$ (at $t=0$) to $x_1=-x_0$ (at $t=\pi$). The last two cases show the accessibility by spacelike geodesics.
\end{enumerate}
\end{proof}
\begin{remark}
We can introduce the time orientation on $S^m_1$ by choosing a globally defined timelike vector field $T$. Then a timelike geodesic starting at $x_0$ and having property $\langle \dot{x}(0),T\rangle_J\, <0$ is called future directed and we introduce the notion of rolling along geodesic to the future. Moreover, when $\langle x_0,x_1\rangle_J\leq -1$ and  $x_1\neq -x_0$, it is possible to reach $x_1$ from $x_0$ by a broken geodesic which change its causal character. For instance, first join $x_0$ to $-x_1$ by a timelike geodesic (if $\langle x_0,x_1\rangle_J>1$) or lightlike geodesic (if $\langle x_0,x_1\rangle_J=1$), and then join   $-x_1$ to $x_1$  by a spacelike geodesic.
\end{remark}
\begin{figure}[h!]\label{figura1}
\centering \scalebox{0.3}{\includegraphics{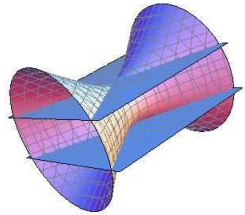}}
\caption{Partition of $S_1^2$ into causal/no-causal subsets from  $x_0=(0,0,1)$.}
\end{figure}

Figure 1 shows the points that can be reached in this way from the point $x_0=(0,0,1)\in S_1^2$. Only $-x_0$ and points above the affine tangent space at $-x_0$ can be reached.

Based in the previous result, it is possible to give  a precise geometric description of the reachable set from a point $x_0$, using spacelike geodesics only, and, similarly, timelike or null geodesics only. Two parallel hyperplanes in $\mathbb{R}_1^{m+1}$, as in the figure above, make the correct separation, as the following shows.
\begin{proposition}\label{geodesics2}
Let $x_0$ and $x_1$ be distinct arbitrary points in $  S_1^{m}$. Then,
\begin{itemize}
\item[(1)]{ $x_1$ is  reachable from $x_0$ by a lightlike geodesic if and only if $x_1\in T_{x_0}^{\aff} S_1^{m}$.}

\item[(2)]{ $x_1$ is  reachable from $x_0$ by a timelike geodesic if and only if $x_1$ is on one side of the hyperplane $T_{x_0}^{\aff} S_1^{m}$, the side that doesn't contain $0\in \dR_1^{m+1}$.}

 \item[ (3)]{ $x_1$ is reachable from $x_0$ by a spacelike geodesic if and only if $x_1$ lies between the hyperplanes $T_{x_0}^{\aff} S_1^{m}$ and
   $T_{-x_0}^{\aff} S_1^{m}$ or $x_1=-x_0$.}
   \end{itemize}
  \end{proposition}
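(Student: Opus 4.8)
The plan is to reduce the whole statement to the single scalar quantity $\langle x_0,x_1\rangle_J$, exactly as in the proof of Proposition~\ref{geodesics1}, and then to re‑express the resulting inequalities as incidence conditions with the two affine hyperplanes. First I would record the elementary identities
$$
T_{x_0}^{\aff}S_1^{m}=\{x\in\dR_1^{m+1}\colon\langle x,x_0\rangle_J=1\},\qquad
T_{-x_0}^{\aff}S_1^{m}=\{x\in\dR_1^{m+1}\colon\langle x,x_0\rangle_J=-1\},
$$
which follow from $T^{\perp}_{x_0}S_1^{m}=\mathrm{span}\{x_0\}$ together with $\langle x_0,x_0\rangle_J=1$. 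Since $\langle 0,x_0\rangle_J=0$, the origin lies strictly on the side $\{\langle\cdot\,,x_0\rangle_J<1\}$ of $T_{x_0}^{\aff}S_1^{m}$, and the open slab ``between'' the two hyperplanes is $\{-1<\langle\cdot\,,x_0\rangle_J<1\}$; this is the dictionary that turns the analytic conditions into the geometric ones.

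For the ``only if'' directions I would substitute the explicit geodesics listed just before Proposition~\ref{geodesics1} into $\langle x(t),x_0\rangle_J$, using $\langle u,x_0\rangle_J=0$. A null geodesic $x(t)=x_0+ut$ gives $\langle x(t),x_0\rangle_J=1$ for all $t$; a timelike geodesic $x(t)=x_0\cosh t+u\sinh t$ gives $\langle x(t),x_0\rangle_J=\cosh t$, which exceeds $1$ precisely when $t\neq 0$, i.e. whenever $x(t)\neq x_0$; a spacelike geodesic $x(t)=x_0\cos t+u\sin t$ gives $\langle x(t),x_0\rangle_J=\cos t$, and separating $t\notin\pi\ent$ (value in $(-1,1)$) from $t\in\pi+2\pi\ent$ (where $x(t)=-x_0$) yields exactly the condition in~(3). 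The ``if'' directions are already contained in the constructive proof of Proposition~\ref{geodesics1}: its four cases produce, from the sign and size of $\langle x_0,x_1\rangle_J$, a null, timelike or spacelike geodesic from $x_0$ to $x_1$; one need only note that the control $u$ built there is nonzero whenever $x_1\neq x_0$, so the curve is a genuine geodesic of the claimed causal type. Combining this with the first paragraph translates the three conditions $\langle x_0,x_1\rangle_J=1$, $\langle x_0,x_1\rangle_J>1$, and $\langle x_0,x_1\rangle_J\in(-1,1)$ or $x_1=-x_0$ into the three stated geometric alternatives.

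The step requiring genuine care is the boundary of case~(3). A point $x_1\in S_1^{m}$ with $\langle x_1,x_0\rangle_J=-1$ need not equal $-x_0$: writing $x_1=-x_0+v$ with $v\perp x_0$ forces only $\langle v,v\rangle_J=0$, so $S_1^{m}\cap T_{-x_0}^{\aff}S_1^{m}$ is in general larger than $\{-x_0\}$. I must therefore verify that such an $x_1\neq -x_0$ is \emph{not} spacelike‑reachable from $x_0$, which is immediate from the computation above: $\langle x(t),x_0\rangle_J=\cos t=-1$ forces $\sin t=0$, hence $x(t)=-x_0$. This is precisely why the statement singles out $x_1=-x_0$ rather than ``$x_1\in T_{-x_0}^{\aff}S_1^{m}$'', and dually why ``between the hyperplanes'' has to be read as the open slab — a point lying on $T_{x_0}^{\aff}S_1^{m}$ itself is reachable by a null, not a spacelike, geodesic, and lands in case~(1). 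Everything else is the bookkeeping already carried out in Proposition~\ref{geodesics1}.
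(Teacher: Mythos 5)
Your proof is correct and follows essentially the same route as the paper: both reduce everything to the scalar $\langle x_0,x_1\rangle_J$, identify the affine hyperplanes with the level sets $\langle x,x_0\rangle_J=\pm 1$, and read the three alternatives off the explicit geodesics together with Proposition~\ref{geodesics1}. Your careful treatment of the boundary case $\langle x_1,x_0\rangle_J=-1$ with $x_1\neq -x_0$ only spells out a detail the paper leaves implicit in its final sentence.
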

\begin{proof}
The proof is based on some simple facts. First, note that the hyperplanes $T_{x_0}^{\aff} S_1^{m}$ and $T_{-x_0}^{\aff} S_1^{m}$ do not intersect. Otherwise, there would exist $\Omega_1,\Omega_2 \in \orth_1(m+1)$ such that
$$
x_0+\Omega_1x_0=-x_0+\Omega_2x_0\ \  \Leftrightarrow\ \  2x_0+(\Omega_1-\Omega_2)x_0
=0\ \ \Leftrightarrow\ \ x_0=0.
$$
Now observe that the set of points in $ \dR_1^{m+1}$ that satisfy a constraint of the form $\langle x,x_0\rangle_J=k$, for some constant $k$, are hyperplanes. So, using a matching dimension argument and the fact that for any $\Omega \in \orth_1(m+1)$,\linebreak  $\langle x_0+\Omega x_0,x_0\rangle_J=1$, we conclude that (1) is true. And, of course, the set $\{x\in S^m_1: \, \langle x_0,x\rangle_J>1\}$, that can be reached by a timelike geodesic, lies on one side of the hyperplane $\widehat{M}=T_{x_0}^{\aff} S_1^{m}$, the side that doesn't contain the origin, proving (2). The last part is a consequence of the first two and the facts in Proposition \ref{geodesics1}.
\end{proof}
The two images in Figure 2 indicate that the region accessible by spacelike geode\-sics narrows as the point $x_0$ moves away from the origin. For points at infinity, the two hyperplanes coincide and only timelike and null geodesics exist.
\begin{figure}[h!]\label{figura2}
\centering \scalebox{0.3}{\includegraphics{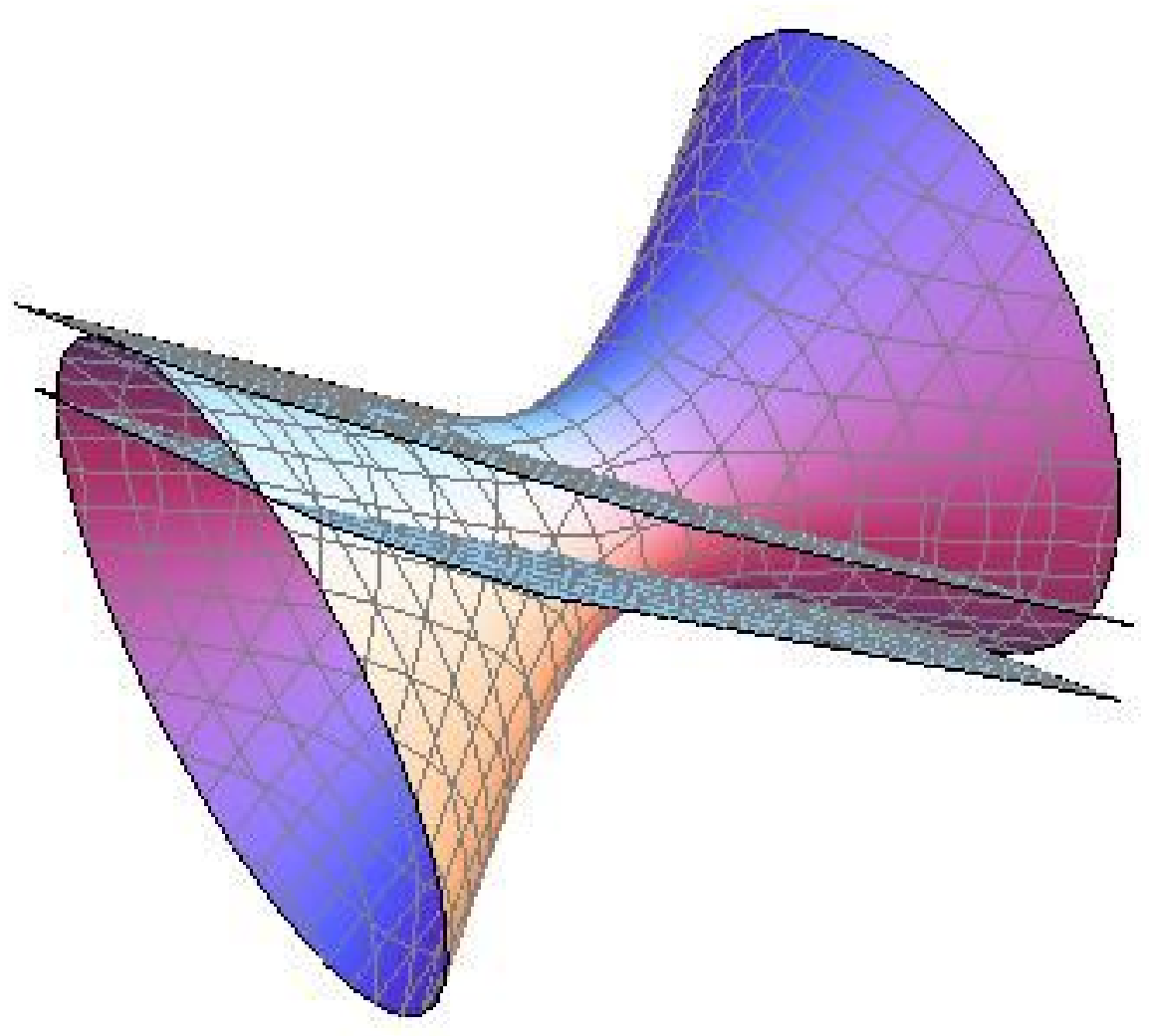}}\hspace*{3 cm}\scalebox{0.26}{\includegraphics{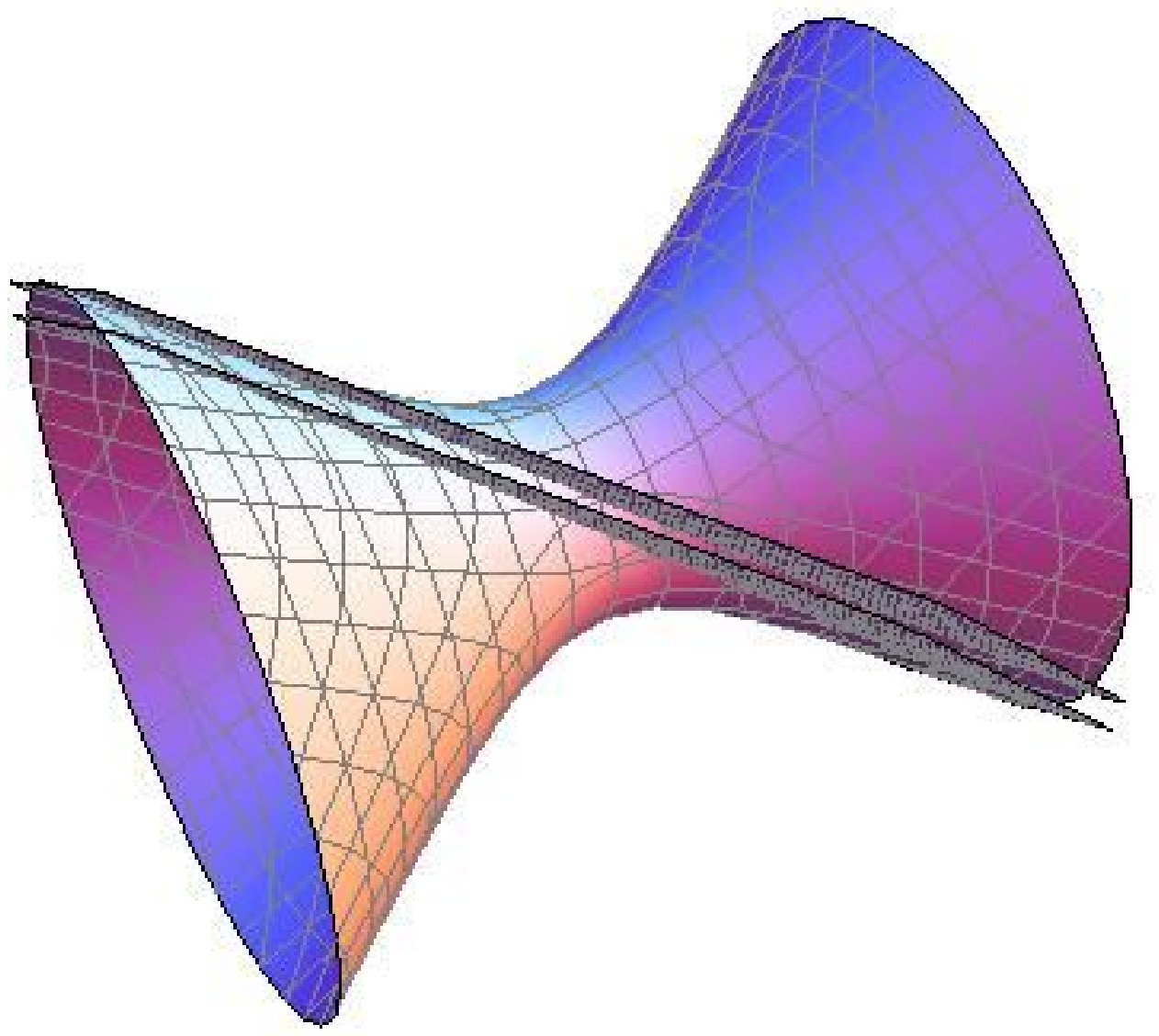}}
\caption{On the left $x_0=(2,2,1)$, on the right $x_0=(\sqrt{40},4,5)$.}
\end{figure}

\subsection{Hyperquadrics} \label{Symmetric spaces}
In this short subsection we want to emphasize that the results obtained for the benchmark example of Section~\ref{Lorentzian sphere} can be extended to hyperquadrics. More precise, let
$$\mathbb S_{\nu}^{m}=\{x\in\mathbb R^{m+1}\mid\ \langle x,x \rangle_J=r,\ r\in\mathbb R\}
$$ be a hypersurface in $\mathbb R^{m+1}$ given by the level set of the scalar product function $\langle \cdot\,,\cdot \rangle_J$ with $J=\diag(-I_{\nu},I_{m+1-\nu})$, that we call a {\it hyperquadric}. The corresponding group of isometries is $\Orth_{\nu}(m+1)$. Analogously to the sphere in Minkowskii space, the kinematic equations of rolling, without slipping and twisting, $\mathbb S_{\nu}^{m}$ over its affine tangent space have the form~\eqref{Kinematic-Lorentz}, as shown in~\cite{ML}. Let $x_0\in\mathbb S_{\nu}^{m}$, $x(t)=R(t)x_0$ and $g(t)=\big(s(t),R^{-1}(t)\big)$, where $R(t)$ is a curve in some subgroup of $\Orth_{\nu}(m+1)$, defined by the orientation and initial value $R(0)$, that jointly with $s(t)$ satisfy the kinematic equations~\eqref{Kinematic-Lorentz}. Then, the map $t\mapsto \big(x(t),g(t)\big)\in \mathbb S_{\nu}^{m}\times\mathbb R_{\nu}^{m+1}\rtimes\Orth_{\nu}(m+1)$ is a rolling of $ \mathbb S_{\nu}^{m}$ over~$T^{\aff}_{x_0} \mathbb S_{\nu}^{m}$.

Now we discuss the issue of the parallel translation. Let $X_0\in T_{x_0}\mathbb S_{\nu}^{m}$ and $X(t)=R(t)X_0$. Then
$$
\langle x(t),X(t) \rangle_J=\langle R(t)x_0,R(t)X_0 \rangle_J=0
$$
since $R(t)$ belongs to the group of isometries and it preserves the corresponding scalar product. It shows that the vector field $X$ is a vector field along the curve $x$ in $\mathbb S_{\nu}^{m}$.
We used an advantage that the manifold is given as a level set of the scalar product and therefore the tangent space is orthogonal to the hypersurface. Then, by using the kinematic equations, we show that $\frac{D}{dt}X(t)=0$ as in the case of the sphere.

Again as in the case of the Lorentzian sphere we can show
$$
\langle \dot{\widehat x}(t),\dot{\widehat x}(t) \rangle_J=\langle \dot x(t),\dot x(t) \rangle_J=\langle\langle \dot R(t),\dot R(t)\rangle\rangle_J=\langle u(t),u(t) \rangle_J
$$
on $\mathbb S_{\nu}^{m}$ and $T_{x_0}^{\aff}\mathbb S_{\nu}^{m}$, that leads to the conclusion that in this case the causal character of the rolling curve $x(t)$, the developing curve $\widehat x(t)$ and the curve in the group of isometries $R(t)$ coincide.

Since the geodesics on $\mathbb S_{\nu}^{m}$ defined by rolling have the same form as in the case of the Lorentzian sphere, Proposition~\ref{geodesics1} remains true for arbitrary hyperquadrics.



\section{Intrinsic rolling} \label{introll}


\subsection{Bundles of isometries} \label{conspace}


Let $V$ and $\widehat V$ be two oriented scalar product spaces with the same index $\mu$ and dimension $m$. We denote by $G(V,\widehat{V})$ the group of all orientation preserving linear isometries between $V$ and $\widehat{V}$. The group $G$ can be any of three groups considered in (\ref{G-orientation}) that preserve orientation, time or space orientation of the scalar product space $V$. When $V=\widehat V$, we write $G(V)$ instead of $G(V,V)$.

For any pair $M$ and $\widehat M$ of connected and oriented manifolds, also with the same index $\mu$ and dimension $m$, we introduce the space $Q$ of all relative positions in which $M$ can be tangent to $\widehat{M}$
\begin{equation} \label{Qdef}
Q = \left\{ \left. q \in G(T_xM, T_{\widehat{x}} \widehat{M})\right| x \in M, \widehat{x} \in \widehat{M}  \right\}.
\end{equation}
This space is a manifold with the structure of an $G_{\mu}(m)$-fiber bundle over $M \times \widehat{M}$ and can be considered as a part of the configuration space of the rolling. The dimension of $Q$ is $2m +(m(m-1)/2=\frac{m(m+3)}{2}$.

Let $\iota: M \rightarrow \mathbb R^{n}_{\nu}=\mathbb{R}^{m + \m}_{\nu}$ and $\widehat{\iota}: \widehat{M} \rightarrow \mathbb{R}^{m + \m}_{\nu}$ be two isometric embeddings. Here $m$ states for the dimension of $T_xM$ and $T_{\widehat x}\widehat M$ and $\mu$ for their index, while $\m$ denotes the dimension of $T_x^{\perp}M$ and $T_{\widehat x}^{\perp}\widehat M$ and $\nu-\mu$ is their index. If $\m> 1$, then the kinematic condition (vi) of normal no twist in Definition~\ref{rolling-classical} becomes non trivial. To describe it we need a counter part of the bundle $Q$, that takes care of the normal components of the embedding. Therefore, we  define a fiber bundle over $M\times \widehat{M}$ of isometries of the normal tangent space. We write
\begin{equation} \label{Pdef}
P_{\iota,\widehat{\iota}} := \left\{ \left. p \in G(T^{\perp}_xM, T^{\perp}_{\widehat{x}} \widehat{M})
\right| x \in M, \widehat{x} \in \widehat{M}  \right\}.
\end{equation}
The space $P_{\iota,\widehat{\iota}}$ is a $G_{\nu-\mu}(\m)$-fiber bundle. We notice that $Q$ is invariant of embeddings $\iota$ and $\widehat\iota$, while $P_{\iota,\widehat{\iota}}$ is not which is reflected in notations. The dimension of $P_{\iota,\widehat\iota}$ is $2m+\frac{\m(\m-1)}{2}$.

We use the notation of the fiber product or Whitney sum $Q \oplus P_{\iota, \widehat \iota}$ for the fiber bundle over $M \times \widehat M$, so that the fiber over $(x, \widehat x) \in M \times \widehat M$ is $Q_{(x,\widehat x)} \times P_{\iota, \widehat \iota (x, \widehat x)}$. The dimension of $Q\oplus P_{\iota,\widehat\iota}$ is $\frac{m(m+3)+\m(\m-1)}{2}$.


\subsection{Reformulation of rolling in terms of bundles}


We define the rolling by making use of the bundle $Q \oplus P_{\iota, \widehat \iota}$ and then we show that the new definition is equivalent to Definition~\ref{rolling-classical}.

\begin{definition} \label{imbeddef3}
An extrinsic rolling $($without slipping or twisting$)$ of $M$ on $\widehat{M}$  along $x(t)$ and $\widehat{x}(t)$  is an absolutely continuous curve $(q,p):[0,\tau] \to Q \oplus
P_{\iota,\widehat{\iota}}$ such that $(q(t),p(t))$ satisfies
\begin{itemize}
\item[(I)] no slip condition: $\dot{\widehat{x}}(t) = q(t) \dot{x}(t)$ for almost every $t$,
\item[(II)] no twist condition $($tangential part$):$
$q(t) \frac{D}{dt} Z(t) = \frac{D}{dt} q(t) Z(t)$
for any tangent vector field $Z(t)$ along $x(t)$ and almost every $t$,
\item[(III)] no twist condition $($normal part$):$ $p(t) \frac{D^{\perp}}{dt} \Psi(t) =
\frac{D^{\perp}}{dt} p(t) \Psi(t)$ for any normal vector field $\Psi(t)$ along $x(t)$ and almost every $t$.
\end{itemize}
\end{definition}

The following proposition shows the equivalence of Definitions~\ref{rolling-classical} and~\ref{imbeddef3}.

\begin{proposition} \label{Prop gtoqp}
If a curve $(x,g)\colon [0,\tau] \rightarrow M \times \mathbb R^{m+\m}_{\nu}\rtimes G_{\nu}(m+\m)$ satisfies the conditions {\rm (i)-(vi)} in Definition~\ref{rolling-classical}, then the mapping
$$t \mapsto (d_{x(t)}g(t)|_{T_{x(t)}M},d_{x(t)}g(t)|_{T_{x(t)}M^{\perp}}) = : \left(q(t),p(t) \right), $$
defines a curve in $Q \oplus P_{\iota,\widehat{\iota}}$ satisfying the conditions {\rm (I)-(III)} of Definition~\ref{imbeddef3}.

Conversely, if $(q,p)\colon [0,\tau] \to Q \oplus P_{\iota,\widehat{\iota}}$ is an absolutely continuous curve  satisfying {\rm (I)-(III)}, then there exists a unique rolling
$$(x,g):[0,\tau] \to M \times \mathbb R^{m+\m}_{\nu}\rtimes G_{\nu}(m+\m),$$
such that $d_{x(t)}g(t)|_{T_{x(t)} M} =q(t)$ and $d_{x(t)}g(t)|_{T_{x(t)} M^{\perp}} =p(t)$.
\end{proposition}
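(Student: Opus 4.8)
The plan rests on one elementary observation: an element $g(t)=(s(t),A(t))$ of $\mathbb R^{m+\m}_{\nu}\rtimes G_{\nu}(m+\m)$ acts on $\mathbb R^{m+\m}_{\nu}$ by the affine map $y\mapsto s(t)+A(t)y$, so its differential at \emph{every} point is the single linear isometry $d_{x(t)}g(t)=A(t)\in G_{\nu}(m+\m)$. Hence the pair $(q(t),p(t))$ in the statement is just $\bigl(A(t)|_{T_{x(t)}M},\,A(t)|_{T_{x(t)}^{\perp}M}\bigr)$, and, conversely, an $A(t)$ that respects the orthogonal splitting \eqref{splitting} is reconstructed from its two restrictions as the direct sum. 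With this in hand, both directions become unwindings of the definitions.

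\textbf{Forward direction.} Assume $(x,g)$ satisfies (i)--(vi) and put $\widehat x(t)=g(t)x(t)$. First I would check $(q(t),p(t))\in Q\oplus P_{\iota,\widehat\iota}$ over $(x(t),\widehat x(t))$: condition (ii), together with the remark that it forces $d_{x(t)}g(t)\,T^{\perp}_{x(t)}M=T^{\perp}_{\widehat x(t)}\widehat M$, shows that $q(t)$ and $p(t)$ are linear isometries of $T_{x(t)}M$ onto $T_{\widehat x(t)}\widehat M$ and of $T^{\perp}_{x(t)}M$ onto $T^{\perp}_{\widehat x(t)}\widehat M$; condition (iii) says $q(t)$ preserves $G$-orientation, and since the $G$-orientation of the normal bundle was fixed in Section~\ref{conspace} to be the one induced by the ambient $G$-orientation and that of $M$, the fact that $A(t)\in G_{\nu}(m+\m)$ preserves both the ambient orientation and the orientation of $TM$ forces $p(t)$ to be orientation-preserving too. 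Absolute continuity of $t\mapsto(q(t),p(t))$ is inherited from that of $g$ and $x$ via local trivializations of $Q\oplus P_{\iota,\widehat\iota}$. Conditions (I), (II), (III) are then literal rewritings of (iv), (v), (vi): $\dot x(t)$, $Z(t)$, $\frac{D}{dt}Z(t)$ are tangent to $M$ and $\Psi(t)$, $\frac{D^{\perp}}{dt}\Psi(t)$ are normal, so $d_{x(t)}g(t)$ acts on them as $q(t)$, respectively $p(t)$, while $q(t)Z(t)$ is tangent and $p(t)\Psi(t)$ normal along $\widehat x(t)$ by (ii).

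\textbf{Converse direction.} Let $(q,p)$ satisfy (I)--(III) and let $x(t)$, $\widehat x(t)$ be its base projections. Using the orthogonal decompositions $\mathbb R^{m+\m}_{\nu}=T_{x(t)}M\oplus T^{\perp}_{x(t)}M=T_{\widehat x(t)}\widehat M\oplus T^{\perp}_{\widehat x(t)}\widehat M$ and the isometries $q(t)$, $p(t)$ between corresponding summands, set $A(t):=q(t)\oplus p(t)$; this is a linear isometry of $\mathbb R^{m+\m}_{\nu}$, it lies in $G_{\nu}(m+\m)$ by the same orientation compatibility invoked above, and it is absolutely continuous in $t$. Put $s(t):=\widehat x(t)-A(t)x(t)$ and $g(t):=(s(t),A(t))$. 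This choice is forced — any rolling $(x,g)$ with $d_{x(t)}g(t)|_{T_{x(t)}M}=q(t)$ and $d_{x(t)}g(t)|_{T^{\perp}_{x(t)}M}=p(t)$ has linear part $A(t)$ on all of $\mathbb R^{m+\m}_{\nu}$, and (i) forces $s(t)=\widehat x(t)-A(t)x(t)$ — which yields uniqueness. Finally I would verify (i)--(vi): (i) because $g(t)x(t)=s(t)+A(t)x(t)=\widehat x(t)\in\widehat M$; (ii) and (iii) because $d_{x(t)}g(t)|_{T_{x(t)}M}=q(t)$ is an orientation-preserving isomorphism onto $T_{\widehat x(t)}\widehat M$ (whence also $d_{x(t)}g(t)\,T^{\perp}_{x(t)}M=T^{\perp}_{\widehat x(t)}\widehat M$); and (iv), (v), (vi) are (I), (II), (III) read in reverse, again using that $d_{x(t)}g(t)$ restricts to $q(t)$ on tangent fields and to $p(t)$ on normal fields.

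\textbf{Main obstacle.} The only step that is more than bookkeeping is the pair of compatible claims that the block-sum map $(q,p)\mapsto q\oplus p$ sends the chosen subgroups into $G_{\nu}(m+\m)$ (so $A(t)$ really lands in $G_{\nu}(m+\m)$, not merely in $\Orth_{\nu}(m+\m)$) and that the normal restriction $p(t)$ arising from a genuine rolling is orientation-preserving. Both follow from the convention fixed in Section~\ref{conspace} that the $G$-orientation of the normal bundle is the one induced by the ambient $G$-orientation and the $G$-orientation of $M$, together with the ``two-out-of-three'' behaviour of induced orientations on an orthogonal direct sum; once this is granted, the rest of the proof is immediate from the definitions.
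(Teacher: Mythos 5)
Your proposal is correct and follows essentially the same route as the paper's own (sketched) proof: the forward direction reads (I)--(III) off from (iv)--(vi) using (i)--(iii) to place $(q,p)$ in $Q\oplus P_{\iota,\widehat\iota}$, and the converse reconstructs $g(t)$ with linear part $\bar A(t)=q(t)\oplus p(t)$ determined by the splitting \eqref{splitting} and translation $\widehat x(t)-\bar A(t)x(t)$. You supply somewhat more detail than the paper (which defers to \cite{GGLM}), notably on the orientation bookkeeping for the normal part, and that is consistent with the intended argument.
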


\begin{proof} We sketch the proof, since details can be found in~\cite{GGLM}.

The conditions (i)-(iii) ensures that the map $(q,p)$ preserves the group $G$ of the orientation and it is clear that the conditions (I)-(III) correspond to the conditions (iv)-(vi).

Conversely, for a given curve $(q(t),p(t))$ in $Q \oplus P_{\iota,\widehat{\iota}}$ over $(x(t),\widehat{x}(t))\in M \times \widehat{M}$ the isometry $g \in \mathbb R^{m+\m}_{\nu}\rtimes G_{\nu}(m+\m)$ is defined by
$$
g(t)\colon \bar{x} \mapsto \bar{A}(t) \bar{x} + \bar{r}(t),  \quad \bar{A}(t)\in G_{\nu}(m+\m)
$$
where $\bar{A}(t)=d_{x(t)}g(t)$ is determined by the conditions
$$
d_{x(t)}g(t)|_{T_{x(t)} M} =q(t)|_{T_{x(t)} M}, \qquad  d_{x(t)}g(t)|_{T_{x(t)} M^{\perp}} = p(t)|_{T_{x(t)} M^{\perp}}.
$$
and $\bar{r}(t) = \widehat{x}(t) - \bar{A}(t) x(t)$.
\end{proof}

A purely intrinsic definition of a rolling is deduced from Definition~\ref{imbeddef3}, by restricting it to the bundle~$Q$.

\begin{definition} \label{intrinsdef}
An intrinsic rolling $($without slipping or twisting$)$ of  $M$ over a manifold $\widehat{M}$, along curves $x(t)$ and $\widehat{x}(t)$,  is an absolutely continuous curve $q\colon[0,\tau] \rightarrow Q$,  with projections $x(t) = \pr_M q(t)$ and \ $\widehat{x}(t) = \pr_{\widehat{M}} q(t)$, satisfying the following conditions:
\item[(I')] no slip condition: $\dot{\widehat{x}}(t) = q(t) \dot{x}(t)$ for almost all $t$,
\item[(II')] no twist condition: $Z(t)$ is a parallel tangent vector field along $x(t)$, if and only if $q(t) Z(t)$ is parallel along $\widehat{x}(t)$ for almost all $t$.
\end{definition}


\subsection{Expression of $(q,p)$ in parallel frame}


Since the rolling without twisting preserves parallel vector fields, we expect that the expression of the curve $(q,p)\colon [0,\tau] \to Q \oplus P_{\iota,\widehat{\iota}}$ would be simpler in parallel frames. Let $x\colon[0,\tau]\to M$ and $\widehat x:[0,\tau]\to\widehat M$ be two fixed curves. We denote by $\{e_j(t)\}_{j =1}^m$ an orthonormal  frame field of parallel tangent vector fields along~$x(t)$ and by $\{\epsilon_\lambda(t)\}_{\lambda =1}^{\m}$ an orthonormal  frame field of normal parallel vector fields along~$x(t)$. Such vector fields can be constructed by parallel transport and normal parallel transport along~$x(t)$. Similarly, along $\widehat x(t)$, we define parallel frames $\{\hat{e}_i\}_{i =1}^m$ and~$\{\hat{\epsilon}_\kappa\}_{\kappa =1}^{\m}$.

\begin{lemma} \label{constantM}
A curve $(q(t),p(t))$ in $Q \oplus P_{\iota,\widehat{\iota}}$ in the fibers over $(x(t),\widehat{x}(t))$, satisfies (II) and (III) if and only if the matrices
$$A(t)=\{a_{ij}(t)\} = \{\langle{\hat{e}}_i, q(t) e_j \rangle_J\}, \quad B(t)=\{b_{\kappa \lambda}(t)\} = \{\langle\hat{\epsilon}_\kappa(t), p(t) \epsilon_\lambda(t)\rangle_J\},$$ in parallel frames are constant.
\end{lemma}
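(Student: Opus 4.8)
The plan is to express the defining relations (II) and (III) in the parallel frames and show that they force the Gram-type matrices $A(t)$ and $B(t)$ to have vanishing derivative. First I would observe that since $\{e_j(t)\}$ is a parallel orthonormal frame along $x(t)$, any tangent vector field $Z(t)$ along $x(t)$ writes $Z(t)=\sum_j z_j(t)e_j(t)$ with $\frac{D}{dt}Z(t)=\sum_j\dot z_j(t)e_j(t)$; in particular, $Z$ is parallel exactly when the coefficients $z_j$ are constant. On the $\widehat M$-side, $q(t)e_j(t)\in T_{\widehat x(t)}\widehat M$ expands in the parallel frame $\{\hat e_i(t)\}$ as $q(t)e_j(t)=\sum_i a_{ij}(t)\hat e_i(t)$, which is precisely the matrix $A(t)$ of the lemma (using that $\{\hat e_i\}$ is orthonormal, so $a_{ij}=\langle\hat e_i,q(t)e_j\rangle_J$ up to the sign $\langle\hat e_i,\hat e_i\rangle_J=\pm1$, a point I would handle by writing the expansion coefficients directly rather than via the inner product, or by absorbing the signature matrix — I will be careful here since the metric is indefinite).

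Next, I would compute both sides of (II) for the frame fields themselves, which suffices by linearity. Since $q(t)$ is a fiberwise linear isometry, differentiating $q(t)e_j(t)=\sum_i a_{ij}(t)\hat e_i(t)$ along $\widehat x(t)$ and using that each $\hat e_i$ is parallel gives
$$\frac{D}{dt}\bigl(q(t)e_j(t)\bigr)=\sum_i\dot a_{ij}(t)\,\hat e_i(t).$$
On the other hand $\frac{D}{dt}e_j(t)=0$, so $q(t)\frac{D}{dt}e_j(t)=0$. Condition (II) applied to $Z=e_j$ therefore reads $\sum_i\dot a_{ij}(t)\hat e_i(t)=0$, and since $\{\hat e_i(t)\}$ is a basis of $T_{\widehat x(t)}\widehat M$ this is equivalent to $\dot a_{ij}(t)=0$ for all $i,j$ and a.e. $t$, i.e. $A(t)$ is (locally, hence globally by connectedness of the interval) constant. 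The converse is immediate by running the same computation backwards and invoking linearity in $Z$. The argument for (III) and $B(t)$ is verbatim the same with $p(t)$, $\epsilon_\lambda(t)$, $\hat\epsilon_\kappa(t)$, $\frac{D^\perp}{dt}$ in place of $q(t)$, $e_j(t)$, $\hat e_i(t)$, $\frac{D}{dt}$, using that normal parallel translation is a linear isometry and that normal parallel frames exist by the Lemma of \cite{Lee} quoted above.

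The only genuine subtlety — the "main obstacle" — is bookkeeping the indefinite signature: in the pseudo-Riemannian setting $\langle\hat e_i,\hat e_i\rangle_J=\varepsilon_i=\pm1$, so the coefficient of $\hat e_i$ in an expansion is $\varepsilon_i\langle\hat e_i,\cdot\rangle_J$, not $\langle\hat e_i,\cdot\rangle_J$. This does not affect the conclusion, since $\dot a_{ij}=0 \iff \varepsilon_i\dot a_{ij}=0$, but I would state it cleanly by noting that $\{\hat e_i(t)\}$ being a frame means the map "coefficients $\mapsto$ vector" is a linear isomorphism at each $t$, so $\frac{D}{dt}(q e_j)=\sum_i\dot a_{ij}\hat e_i$ vanishes iff all $\dot a_{ij}$ vanish, independently of any signs. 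One should also remark that absolute continuity of $(q,p)$ makes $A(t),B(t)$ absolutely continuous, so "$\dot A\equiv 0$ a.e." indeed gives "$A$ constant"; and that nowhere is parallelism of the frames on $M$ used beyond $\frac{D}{dt}e_j=0$, so the statement is exactly an equivalence as claimed.
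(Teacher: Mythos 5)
Your proposal is correct and follows essentially the same route as the paper: condition (II) applied to the parallel frame fields $e_j$ forces $q(t)e_j(t)$ to be parallel along $\widehat{x}(t)$, so its coordinates in the parallel frame $\{\hat e_i\}$ are constant, with the converse recovered by expanding a general $Z$ in the parallel frame (the paper states exactly this and defers the computation to the Riemannian case in the cited reference). Your explicit handling of the signs $\varepsilon_i$ and of absolute continuity simply fills in details the paper leaves implicit.
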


\begin{proof} Since $\{q(t)e_j \}$, $j=1,\ldots, m$ is a parallel frame along $\widehat x(t)$, then the coordinates of vectors $\{q(t)e_j \}$ in the basis $\{\hat{e}_i\}$, $i=1,\ldots, m$, should be constant. The precise calculation go along the same lines as those in~\cite{GGLM} for the Riemannian case.
\end{proof}

\begin{example}\label{meaning-for-Lorentz} We illustrate Lemma~\ref{constantM} by constructing the matrices $A, B$ for the case of the $2$-dimensional Lorentz sphere and give them a geometric meaning. The notations are those in Section~\ref{Lorentzian sphere}.
Let $x_0=\left[\begin{array}{ccc}
0&0&1
\end{array}\right] ^{\mathbf t} \in S_1^2$,
$$
\begin{array}{l}
R(t)=\exp{(t\left[\begin{array}{ccc}
0&0&1\\
0&0&0\\
1&0&0
\end{array}\right])}=\left[\begin{array}{ccc}
\cosh{t}&0&\sinh{t}\\
0&1&0\\
\sinh{t}&0&\cosh{t}
\end{array}\right] , \\\\ x(t)=R(t)x_0=\left[\begin{array}{c}
\sinh{t}\\
0\\
\cosh{t}
\end{array}\right] .
\end{array}
$$
Define
$$
\begin{array}{l}
e_1(t)=\left[\begin{array}{c}
\cosh{t}\\0\\\sinh{t}
\end{array}\right] , \,\, e_2(t)=\left[\begin{array}{c}
0\\1\\0
\end{array}\right] , \,\, \epsilon_1(t)=\left[\begin{array}{c}
\sinh{t}\\0\\\cosh{t}
\end{array}\right].
\end{array} $$
The frame field $\{e_1(t), e_2(t)\}$ is orthonormal parallel and tangent along $x(t)$ and the vector field $\{\epsilon_1(t)\}$ represents the normal parallel vector field along $x(t)$. Note that $e_1(t)$ is timelike, while $e_2(t)$ and $\epsilon_1(t)$ are spacelike. Now, $\widehat{M}=T_{x_0}^{\aff}M$, so that  $T_{\widehat{x}(t)} \widehat{M}=T_{x_0}S_1^2$ and, consequently, $\{\hat{e}_1(t), \hat{e}_2(t)\}$ (respectively  $\{\hat{\epsilon}_1(t)\}$), defined below, form an orthonormal frame field of parallel  tangent (respectively normal) vector fields along $\hat{x}(t)$.
$$
\hat{e}_1(t)=\left[\begin{array}{c}\sqrt{2}\\1\\0\end{array}\right] ,\quad \hat{e}_2(t)=\left[\begin{array}{c}1 \\\sqrt{2}\\0\end{array}\right]
,\quad \hat{\epsilon}_1(t)=\left[\begin{array}{c}0 \\0\\1\end{array}\right] .
$$
Again, the first vector is timelike, while the last two are spacelike. To compute the matrices $A$ and $B$ in the previous Lemma, note that
$$
q(t)e_1(t)=R^{-1}(t)e_1(t)=\left[\begin{array}{c}
1\\0\\0
\end{array}\right],\quad q(t)e_2(t)=R^{-1}(t)e_2(t)=\left[\begin{array}{c}
0\\1\\0
\end{array}\right],
$$
and because we are in codimension $1$, $p(t)\epsilon_1(t)=\hat{\epsilon}_1(t)$. So, $B=I_1$ and the matrix $A$ with entries $a_{ij}=\langle\hat{e}_i(t), q(t) e_j(t)\rangle_J$ is
$$
A=\left[\begin{array}{cc}
-\sqrt{2}&1\\
-1&\sqrt{2}
\end{array}\right].
$$
We emphasize that since we use the scalar product $\langle\cdot\,, \cdot\rangle_J$ defined by $J$, then the matrix $A$ defers from
matrix $T$ defined by usual euclidean inner product by the first row: all entries of the first row of $T$ have opposite sign to the corresponding entries of the first row of $A$, that is, $A=JT$. This is due to the fact that $\hat{e}_1$ is timelike.  If the basis elements where all spacelike, then $A$ and $T$ would coincide. We conclude that the block matrix
$$
W=\left[\begin{array}{cc}
A&0\\
0&B\\
\end{array}\right]=\left[\begin{array}{ccc}
-\sqrt{2}&1&0\\
-1&\sqrt{2}&0\\
0&0&1
\end{array}\right]
$$
is a twist which reverses time-orientation and preserves space-orientation. In particular, $W$ transforms  the ordered orthonormal basis $\{\hat{e}_1,\hat{e}_2, \hat{\epsilon}_1\}$ of $\mathbb{R}_1^3$ into the ordered orthonormal basis $\{-q(t)e_1(t),q(t)e_2(t), \hat{\epsilon}_1\}$.
\end{example}



\subsection{Intrinsic and extrinsic rollings along the same curves}\label{exvsint}


Assume that a pair of curves $(x,\widehat x)\colon[0,\tau]\to M\times\widehat M$ is fixed and they are projections of an intrinsic rolling map $q(t)$. The following uniqueness question can be asked: are there other intrinsic rollings along the same curve $(x,\widehat x)$?

Before giving the answer to this question, we make some observations. Let $\{e_j(t)\}_{j =1}^m$ and $\{\widehat e_j(t)\}_{j =1}^m$ be orthonormal tangent parallel frames along $x(t)$ and $\widehat x(t)$, respectively. Then
$$
\dot{\widehat x_i}(t)=\langle \widehat e_i,\dot{\widehat x}(t)\rangle_J=\langle \widehat e_i,q\dot x (t)\rangle_J=\sum_{j=1}^n\dot x_j(t)\langle \widehat e_i,qe_j\rangle_J=\sum_{j=1}^n a_{ij}\dot x_j(t).
$$
If we assume that some of $\dot x_k=\langle e_k,\dot x\rangle_J$ vanish, then $k$-columns of the matrix $\{a_{ij}\}$ can be changed without influence on the resulting $\dot{\widehat x}(t)$ and this gives the freedom in the choice of the intrinsic rolling $q(t)$. Now we introduce some necessary definitions and formulate the result.

Recall that a tangent vector field $v$ along an absolutely continuous curve $\gamma$ on a pseudo-Riemannian manifold is called normal to $\gamma$ if $\langle v(t),\dot\gamma(t)\rangle_J=0$ for almost all $t$ from the domain of the definition of the curve. We understand that it could be confusing to use the word {\it normal} in two different meanings, nevertheless since both meanings are classical and we use the latter sense of normal vector field only in Theorem~\ref{number intrinsic}, we continue to do it.

Let $q\colon[0,\tau] \rightarrow Q$ be an intrinsic rolling map with projection $\pr_{M \times \widehat{M}} q(t) = (x(t),\widehat{x}(t))$.
Define the vector spaces
$$V = \Big\{v(t) \text{ is a tangent parallel vector field normal to } x(t) \Big\},$$
$$\widehat{V} = \Big\{\widehat{v}(t) \text{ is a tangent parallel vector field normal to } \widehat{x}(t)\Big\}.$$
 Note that both, the inner product and the orientation, are preserved under parallel transport. Hence, for any pair $v, w \in V$, the value of $\langle v(t), w(t) \rangle_J$ remains constant for any $t$. Therefore, the metric on $M$ induces a well defined inner product on $V$. Similarly, the $G$-orientation on $V$ is well defined, since it does not depend on $t$. Analogous considerations hold for $\widehat{V}$.

\begin{theorem} \label{number intrinsic}
Let $q\colon[0,\tau] \rightarrow Q$ be a given intrinsic rolling map without slipping or twisting that is projected to $(x(t),\widehat{x}(t))$.
Then $\dim V=\dim\widehat V$. Moreover,
\begin{itemize}
\item[(a)] the map $q$ is the unique intrinsic rolling of $M$ over $\widehat{M}$ along $x(t)$ and $\widehat{x}(t)$ if and only if $ \dim V\leq 1$,
\item[(b)] if $\dim V \geq 2$, all the rolling maps along $x(t)$ and $\widehat{x}(t)$ differ from $q$ by an element in $G(\widehat{V})$.
\end{itemize}
\end{theorem}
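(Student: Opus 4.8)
The plan is to work entirely in the parallel frames $\{e_j(t)\}$ along $x(t)$ and $\{\widehat e_i(t)\}$ along $\widehat x(t)$, where by Lemma~\ref{constantM} an intrinsic rolling is encoded by a constant matrix $A=\{a_{ij}\}\in G_\mu(m)$ with $a_{ij}=\langle\widehat e_i,q e_j\rangle_J$, and the no-slip condition (I') becomes the linear relation $\dot{\widehat x_i}(t)=\sum_j a_{ij}\dot x_j(t)$ for almost every $t$. So the whole question reduces to linear algebra: given the (measurable) vector-valued functions $\dot x(t)$ and $\dot{\widehat x}(t)$ in coordinates, how many constant $A\in G_\mu(m)$ satisfy $A\dot x(t)=\dot{\widehat x}(t)$ a.e.?

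First I would identify $V$ with its coordinate image: a tangent parallel vector field is exactly one with constant coordinates in $\{e_j\}$, so $V$ corresponds to the subspace $W=\{c\in\real^m : \langle c,\dot x(t)\rangle_J=0 \text{ a.e.}\}$, i.e. the $J$-orthogonal complement of $U:=\overline{\mathrm{span}}\{\dot x(t):t\}$; likewise $\widehat V\cong\widehat W=\widehat U^{\perp_J}$. Since $q(t)$ is an isometry carrying the parallel frame $e_j$ to the parallel frame $qe_j$ and (by no-slip) carrying $\dot x(t)$ to $\dot{\widehat x}(t)$, the constant matrix $A$ maps $U$ isometrically onto $\widehat U$, hence maps $W=U^{\perp_J}$ onto $\widehat W=\widehat U^{\perp_J}$ (here one uses that $U$, being the span of vectors on which an isometry acts, is nondegenerate — this needs a short argument, or one restricts attention to the nondegenerate part; in the pseudo-Riemannian setting degeneracy of $U$ is the subtle point and I would address it by noting that $A$ preserves the radical of $U$ and $\widehat U$ and matching dimensions of radicals too). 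In particular $\dim V=\dim W=\dim\widehat W=\dim\widehat V$, and $A$ restricted to $W$ is an element of $G(V,\widehat V)$; conversely $A$ is determined on $U$ by the no-slip data.

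Next, for the uniqueness dichotomy: if $A,A'$ are two such matrices, then $A-A'$ vanishes on $U$, so $A'=A$ on $U$ and $A'=A\circ h$ where $h=A^{-1}A'$ fixes $U$ pointwise and restricts to an isometry of $W=V$; thus the set of rolling maps along $(x,\widehat x)$ is a torsor under $G(V)$ acting on the $W$-block, which gives part (b) immediately. For part (a), $\dim V\le 1$ means the isometry group $G(V)$ is trivial: if $\dim V=0$ there is nothing to choose, and if $\dim V=1$ then $W$ is a $1$-dimensional nondegenerate subspace and an orientation-preserving isometry of a line is the identity — so $q$ is unique. Conversely if $\dim V\ge 2$ there are nontrivial orientation-preserving isometries of $W$ (rotations, or boosts in the indefinite case), producing genuinely different rolling maps, so uniqueness fails. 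I should also check that each such $A\circ h$ indeed gives an admissible intrinsic rolling, i.e. that it still satisfies (I') and (II') and lies in the correct orientation class — (I') holds since $h$ fixes $U$, (II') is automatic from Lemma~\ref{constantM} since $A\circ h$ is still a constant matrix, and the orientation class is preserved because $G(V)\subset G_\mu(\dim V)$ consists of orientation-preserving maps by construction.

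The main obstacle I anticipate is the possible degeneracy of the velocity span $U=\overline{\mathrm{span}}\{\dot x(t)\}$ as a subspace of the indefinite inner product space $(\real^m,\langle\cdot,\cdot\rangle_J)$: in Riemannian signature $U^{\perp}$ is automatically a complement and everything is clean, but in the pseudo-Riemannian case $U$ may be degenerate, $U\cap U^{\perp_J}\ne 0$, and then "$V$" as defined (parallel fields normal to $x$) contains the radical of $U$ and the bijection with $\widehat V$ and the torsor structure require more care about how $A$ acts on the non-split flag $U\cap U^{\perp}\subset U+U^{\perp}\subset\real^m$. I would handle this by decomposing with respect to this flag, showing $A$ must preserve each piece and act as a prescribed isometry on the graded pieces determined by the no-slip data, leaving only the $G(V)$-freedom on the complementary nondegenerate part of $V$; making the orientation bookkeeping consistent through this decomposition is the delicate step.
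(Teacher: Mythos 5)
Your argument is essentially the paper's own proof: both reduce, via Lemma~\ref{constantM}, to a constant matrix in parallel frames, observe that the no-slip condition determines that matrix exactly on the span of the velocity vectors, and identify the remaining freedom with an orientation-preserving isometry of $V\cong\widehat V$ (the paper does this concretely by adapting the orthonormal parallel frame so that its first $k=\dim V$ fields span $V$, which forces any other rolling's matrix to be block-diagonal with a free block $A'\in G(\mathbb{R}^{k}_{\xi})$, unique precisely when $k\leq 1$). The degeneracy issue you flag is a real subtlety of the pseudo-Riemannian setting, but it is implicitly excluded rather than resolved in the paper: treating $V$, $\widehat V$ as scalar product spaces with isometry group $G(\widehat V)$ and choosing an orthonormal parallel frame whose first $k$ fields span $V$ presupposes $V$ nondegenerate, so no flag decomposition is carried out there either, and your proof matches the paper's level of rigor on this point.
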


\begin{proof} Choose the frame of parallel vector fields $\{e_i\}_{i=1}^n$ along $x(t)$ and define the parallel frame $\{\hat{e}_i \}_{i=1}^n$ along $\widehat{x}(t)$ by $q(t) e_i = \hat{e}_i$. Assume that the first $k$ vector fields of each frame are orthogonal to curves $x(t)$ and $\widehat x(t)$, respectively. Notice that $e_1, \dots, e_k$ is a basis for $V$, and $\hat e_1, \dots, \hat e_k$ is a basis for $\widehat V$. By Lemma~\ref{constantM} the corresponding matrix $A=\{a_{ij}\}=\langle\hat{e}_i,q e_j\rangle_J$ is the diagonal matrix with $\pm 1$ on diagonal according to the causal character of the basic vectors.

Writing $\dot{\widehat{x}} = \sum_{i =1}^n \dot{\widehat{x}}_i(t) \hat{e}_i(t)$ and $\dot{x} = \sum_{i =1}^n \dot{x}_i(t) e_i(t)$, we get $\dot{\widehat{x}}_i(t) = \dot{x}_i(t)$ for $i=1,\ldots,n$ and $\dot{\widehat{x}}_i(t) = \dot x_i(t) =0$ for $i=1,\ldots,k$. So, if $\widetilde q$ is any other rolling, then $\widetilde A =\{\widetilde a_{ij}\}= \langle\hat{e}_i(t) ,\widetilde q(t) \, e_j(t)  \rangle_J$ is clearly of the form
\begin{equation}\label{suchmatrix} \widetilde A =
\left( \begin{array}{cc} A' & 0 \\ 0 & J_{m-k} \end{array} \right), \qquad A' \in G(\mathbb R_{\xi}^{k}),\end{equation}
where $J_{m-k}$ is the $\big((m-k)\times(m-k)\big)$ matrix with entires $\pm1$ on the diagonal and $0$ otherwise. The matrix $A^{\prime}$ is unique if $k$ is 0 or 1. If $k \geq 2$, there is more freedom, since in the equality $\dot{\widehat x_i}=\sum_{j=1}^n\widetilde a_{ij}\dot x_j=\sum_{j=1}^n\widetilde a_{ij}\dot{\widehat x}_j$ the first $k$ values of $\dot{\widehat x}_j$ vanish.

The converse also holds, that is, for any matrix $\widetilde A$ on the form~\eqref{suchmatrix}, there is a rolling $\widetilde q$ corresponding to it.
\end{proof}

In particular, if the curve $x\colon[0,\tau]\to M$ is a geodesic, we have the following consequence of Theorem~\ref{number intrinsic}.

\begin{corollary}\label{answer1}
Assume that $x(t)$ is a geodesic in $M$. Then there exists an intrinsic rolling of $M$ on $\widehat{M}$ along $(x(t),\widehat{x}(t))$ if and only if $\widehat{x}(t)$ is a geodesic such that $\langle \dot x(t),\dot x(t)\rangle_J=\langle \dot{\widehat x}(t),\dot{\widehat x}(t)\rangle_J$.
Moreover, if $m \geq 2$, and if $\widehat{V}$ is defined as in Theorem \ref{number intrinsic}, then
$$\dim \widehat{V} = m-1,$$
and all the rollings along $x(t)$ and $\widehat{x}(t)$ differ by an element in $G(\widehat{V})$.
\end{corollary}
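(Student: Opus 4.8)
The plan is to derive Corollary~\ref{answer1} as a direct application of Theorem~\ref{number intrinsic}, using the defining property of geodesics that $\dot x$ is itself a parallel vector field along $x$. First I would prove the ``if and only if'' statement. If an intrinsic rolling of $M$ on $\widehat M$ along $(x(t),\widehat x(t))$ exists, then by the no-slip condition (I') we have $\dot{\widehat x}(t) = q(t)\dot x(t)$, and since $q(t)$ is a linear isometry, $\langle\dot{\widehat x}(t),\dot{\widehat x}(t)\rangle_J = \langle\dot x(t),\dot x(t)\rangle_J$; moreover, because $x$ is a geodesic, $\dot x$ is tangent parallel along $x$, so by the no-twist condition (II') the field $q(t)\dot x(t) = \dot{\widehat x}(t)$ is parallel along $\widehat x$, i.e.\ $\widehat x$ is a geodesic. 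Conversely, if $\widehat x$ is a geodesic with $\langle\dot x,\dot x\rangle_J = \langle\dot{\widehat x},\dot{\widehat x}\rangle_J$, one constructs $q(t)$ explicitly: pick an orthonormal parallel frame $\{e_i(t)\}$ along $x$ with $e_m(t)$ proportional to $\dot x(t)$ (possible away from points where $\dot x$ vanishes, and extended by continuity), pick an orthonormal parallel frame $\{\widehat e_i(t)\}$ along $\widehat x$ with $\widehat e_m(t)$ the matching multiple of $\dot{\widehat x}(t)$ — here the equality of the scalar products guarantees $e_m$ and $\widehat e_m$ have the same causal character and the same normalization, so the correspondence is an isometry — and set $q(t)e_i(t) := \widehat e_i(t)$. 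One then checks that $q$ so defined lies in $Q$ (it is an orientation-preserving isometry $T_{x(t)}M \to T_{\widehat x(t)}\widehat M$, after possibly adjusting one of the remaining frame vectors to fix orientation), and that (I') and (II') hold: (II') is immediate since $q$ carries the parallel frame $\{e_i\}$ to the parallel frame $\{\widehat e_i\}$, and (I') holds because $\dot x = c(t) e_m$ forces $\dot{\widehat x} = c(t)\widehat e_m = q\dot x$, using that both curves, being geodesics with initial velocities of equal norm, have velocity with the same constant coordinate $c(t)$ in the respective parallel frame.

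For the second part, assuming $m \ge 2$, I would invoke Theorem~\ref{number intrinsic} with this rolling $q$. The point is to identify $\widehat V$, the space of tangent parallel vector fields along $\widehat x$ that are normal to $\widehat x$. Since $\widehat x$ is a geodesic, $\dot{\widehat x}$ is parallel, and a parallel field $\widehat v$ is normal to $\widehat x$ exactly when $\langle \widehat v(t),\dot{\widehat x}(t)\rangle_J = 0$; because both $\widehat v$ and $\dot{\widehat x}$ are parallel, this inner product is constant in $t$, so normality is a single linear condition cutting out an $(m-1)$-dimensional subspace of the $m$-dimensional space of tangent parallel fields along $\widehat x$ — provided $\dot{\widehat x}$ is not identically null in the degenerate sense; here we should note $\dot{\widehat x}(t)$, being a nonzero geodesic velocity of fixed causal type, spans a nondegenerate or null line, and in either case the orthogonal complement inside the space of parallel fields has dimension $m-1$. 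Hence $\dim\widehat V = m-1 \ge 1$, in fact $\ge 2$ once $m \ge 3$; but actually $m-1 \ge 1$ already, and for the conclusion ``all rollings differ by an element of $G(\widehat V)$'' we apply part (b) of Theorem~\ref{number intrinsic} which requires $\dim V \ge 2$. Since $\dim V = \dim\widehat V = m-1$, this holds precisely when $m \ge 3$; when $m = 2$ we have $\dim V = 1$ and part (a) gives uniqueness, and $G(\widehat V)$ is then trivial, so the statement ``all rollings differ by an element in $G(\widehat V)$'' remains literally true (the only element being the identity).

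I would organize the write-up as: (1) the equivalence, split into the two implications as above; (2) the explicit construction in the converse direction, stated briefly since it parallels the frame constructions already used in Example~\ref{meaning-for-Lorentz} and Lemma~\ref{constantM}; (3) the dimension count for $\widehat V$ using the constancy of inner products of parallel fields; (4) citation of Theorem~\ref{number intrinsic}(b) (and (a) in the boundary case) to conclude. The main obstacle I anticipate is the subtlety in the converse construction when $\dot x(t)$ (hence $\dot{\widehat x}(t)$) is null or vanishes at isolated points: one cannot globally complete $\dot x/|\dot x|$ to an orthonormal parallel frame in the null case, so instead one argues that since $\dot x$ is parallel, $\langle\dot x,\dot x\rangle_J$ is a constant $c_0$; if $c_0 \ne 0$ one normalizes as described, while if $c_0 = 0$ one instead picks any orthonormal parallel frames on both sides with the property that $\dot x$ and $\dot{\widehat x}$ have identical coordinate vectors (again possible because the equality of norms is automatic and one matches the decomposition of the constant null velocity vector), and defines $q$ accordingly; the verification of (I')–(II') is then the same. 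This case analysis is the only place where genuine care is needed; everything else is a mechanical consequence of parallel transport being an orientation-preserving isometry together with Theorem~\ref{number intrinsic}.
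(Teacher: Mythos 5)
Your proposal is correct and takes essentially the same route as the paper's proof: derive the geodesic correspondence and equality of speeds from (I')--(II'), construct the rolling from parallel orthonormal frames adapted to $\dot x$ and $\dot{\widehat x}$, identify $\dim\widehat V=m-1$, and invoke Theorem~\ref{number intrinsic} for the freedom $G(\widehat V)$. You are in fact somewhat more careful than the paper itself, which disposes of the causal-type issue by assuming ``without loss of generality'' that $x$ is timelike and does not separately discuss the null case or the $m=2$ boundary where part (b) of Theorem~\ref{number intrinsic} does not directly apply.
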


\begin{proof} Calculating the covariant derivatives, and using the no-slip and no-twist conditions (I')-(II'), we obtain
$$\frac{D}{dt} \dot{\widehat{x}}(t) = \frac{D}{dt} q(t) \dot{x}(t)=q(t) \frac{D}{dt} \dot{x}(t).
$$
Thus, the curve $x(t)$ is a geodesic if and only if $\widehat{x}(t)$ is also geodesic. The property (I') implies $\langle \dot{\widehat x}(t),\dot{\widehat x}(t)\rangle_J=\langle \dot x(t),\dot x(t)\rangle_J$. Conversely, equal speeds implies that $\dot x(t)$ differs from $\dot{\widehat x}(t)$ by an isometry $q(t)\colon T_{x(t)}M\to T_{\widehat x}(t)\widehat M$ and the condition~(I') follows.

Without loss of generality we can suppose that $x$ is a timelike geodesic. We start the construction of rolling map by choosing the vector field $e_m(t) = \frac{\dot x(t)}{|\langle \dot{x}(t), \dot{x}(t) \rangle_J|^{1/2}}$ that is parallel along $x(t)$. Pick the remaining $m-1$ parallel vector fields such that they form an orthonormal basis together with $e_m(t)$ along the curve $x(t)$. We repeat the same construction for a parallel frame $\{ \hat{e}_i(t) \}_{i=1}^m$ along $\widehat{x}(t)$. Define the intrinsic rolling $q(t)$ by
\begin{equation} \label{intr1 eq}
\begin{split}
& \langle\hat{e}_m(t), q(t) \, e_j(t)\rangle_J = \langle\hat{e}_j(t) , q(t) \, e_m(t)\rangle_J = -\delta_{m,j},\\
& \ \ \ \ \ \ A' = \left\{\langle\hat{e}_{i}(t) , q(t) \, e_{j}(t)\rangle_J \right\}_{i,j =1}^{m-1},
\end{split}
\end{equation}
where $A' \in G_{\mu-1}(m-1)$ will be a constant matrix. Conversely, we can construct a rolling
by formulas~\eqref{intr1 eq} starting from $A' \in G_{\mu-1}(m-1)$.
\end{proof}
\begin{remark}
Analogously to the spaces $V$ and $\widehat V$ in Theorem~\ref{number intrinsic}, let us define the vector spaces
$$E = \left\{\epsilon(t) \text{ is a normal parallel vector field normal to } x(t) \right\},$$
$$\widehat{E} = \left\{\widehat{\epsilon}(t) \text{ is a normal parallel vector field normal to } \widehat{x}(t) \right\},$$
with inner product and orientation induced from the metrics on $T_x^{\perp}M$ and $T_{\widehat x}^{\perp}\widehat M$. Both vector spaces have dimension $\m$. An extrinsic rolling $(q,p)$ extending an intrinsic rolling $q$ is determined up to a left action of $G(\widehat{E})$ or, equivalently, up to a right action of $G(E)$. Both $G(E)$ and $G(\widehat{E})$ are isomorphic to $G_{\nu-\mu }(\m)$, but not canonically.
\end{remark}

The following theorem concerns the question of the extension of an intrinsic rolling to the extrinsic one if the isometric imbeddings of $M$ and $\widehat M$ into some ${\mathbb R}_{\nu}^{n}$ are given.

\begin{theorem} \label{unique extension}
Let $q\colon[0,\tau] \rightarrow Q$ be an intrinsic rolling and let $\iota\colon M \rightarrow \mathbb{R}^{m+\m}_{\nu}$ and $\widehat{\iota}\colon \widehat{M} \rightarrow \mathbb{R}^{m+\m}_{\nu}$ be given embeddings.
Then, given an initial normal configuration
$$p_0 \in (P_{\iota,\widehat \iota})_{(x_0, \widehat x_0)},\mbox{ where }(x_0, \widehat x_0) = \pr_{M \times \widehat M} q(0),$$
there exists a unique extrinsic rolling $(q,p): [0,\tau] \rightarrow Q \oplus P_{\iota,\widehat{\iota}}$ satisfying $p(0) = p_0$.
\end{theorem}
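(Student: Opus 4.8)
The plan is to build the normal component $p(t)$ by hand in normal parallel frames, exactly as the tangential component is built in the proofs of Theorem~\ref{number intrinsic} and Corollary~\ref{answer1}. First I would note the reduction: since $q$ is already an intrinsic rolling it satisfies the no-slip condition (I) and the tangential no-twist condition (II) of Definition~\ref{imbeddef3}, so producing an extrinsic rolling $(q,p)$ amounts precisely to producing an absolutely continuous curve $p(t)\in G(T^\perp_{x(t)}M,\,T^\perp_{\widehat x(t)}\widehat M)$ lying in the fibers over $(x(t),\widehat x(t))=\pr_{M\times\widehat M}q(t)$ and satisfying the normal no-twist condition (III). The key tool is Lemma~\ref{constantM}: condition (III) holds if and only if the matrix $B(t)=\{\langle\widehat\epsilon_\kappa(t),p(t)\epsilon_\lambda(t)\rangle_J\}$ of $p(t)$ in normal parallel frames is constant in $t$.

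For the construction, I would fix an orthonormal basis $\{\epsilon_\lambda(0)\}_{\lambda=1}^{\m}$ of $T^\perp_{x_0}M$ with the first $\nu-\mu$ vectors timelike, and let $\{\epsilon_\lambda(t)\}$ be its normal parallel transport along $x(t)$; this exists and is unique by the lemma of \cite{Lee} quoted in Section~\ref{sec_basic}, and it stays orthonormal with the same causal pattern because $\frac{D^\perp}{dt}$-parallel transport is a linear isometry. I would do the same along $\widehat x(t)$ to get $\{\widehat\epsilon_\kappa(t)\}$. The datum $p_0$ then determines the constant matrix $B=\{b_{\kappa\lambda}\}$ with $b_{\kappa\lambda}=\langle\widehat\epsilon_\kappa(0),p_0\,\epsilon_\lambda(0)\rangle_J$, which lies in $G_{\nu-\mu}(\m)$ since $p_0$ is an orientation-preserving isometry between two scalar-product spaces of index $\nu-\mu$ with fixed orthonormal bases. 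I then define $p(t)$ as the unique linear map with
\begin{equation*}
\langle\widehat\epsilon_\kappa(t),\,p(t)\,\epsilon_\lambda(t)\rangle_J=b_{\kappa\lambda},\qquad 1\le\kappa,\lambda\le\m .
\end{equation*}

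Then I would verify the three requirements. (i) For each $t$, $p(t)$ sends the orthonormal frame $\{\epsilon_\lambda(t)\}$ to a frame with the same Gram matrix and the same $G$-orientation as $\{\widehat\epsilon_\kappa(t)\}$, because $B\in G_{\nu-\mu}(\m)$; hence $p(t)\in G(T^\perp_{x(t)}M,T^\perp_{\widehat x(t)}\widehat M)$ and $(q(t),p(t))\in Q\oplus P_{\iota,\widehat\iota}$. (ii) Absolute continuity of $p$ follows because the frames $\epsilon_\lambda(t),\widehat\epsilon_\kappa(t)$ are absolutely continuous in $t$ (they solve linear ODEs driven by the absolutely continuous curves $x,\widehat x$) and $p(t)$ is a fixed linear combination of them. (iii) Condition (III) holds by Lemma~\ref{constantM}, since the matrix of $p(t)$ in these normal parallel frames is the constant $B$; together with (I) and (II) for $q$, this makes $(q,p)$ an extrinsic rolling, and $p(0)=p_0$ by construction. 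For uniqueness, if $(q,\widetilde p)$ is another extrinsic rolling extending $q$ with $\widetilde p(0)=p_0$, then by Lemma~\ref{constantM} the matrix of $\widetilde p(t)$ in the same frames is constant, equal to $B$ at $t=0$, hence equal to that of $p(t)$ for all $t$, so $\widetilde p=p$.

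The only delicate point is signature bookkeeping: with the chosen ordering of timelike/spacelike vectors in the normal orthonormal frames one must check that an orientation-preserving isometry corresponds \emph{exactly} to a matrix in $G_{\nu-\mu}(\m)$ and conversely — the normal analogue of the identification $A=JT$ illustrated in Example~\ref{meaning-for-Lorentz}. Everything else is a routine transcription, to the normal bundle, of the tangential argument already given for Theorem~\ref{number intrinsic}.
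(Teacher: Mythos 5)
Your construction is exactly the paper's proof: the authors also define $p(t)$ by requiring its matrix in normal parallel frames to equal the constant matrix $B_0=(\langle\widehat\epsilon_\kappa(0),p_0\,\epsilon_\lambda(0)\rangle_J)$ and invoke Lemma~\ref{constantM} for both existence (condition (III)) and uniqueness. Your version merely spells out the routine verifications (isometry, orientation, absolute continuity) that the paper leaves implicit.
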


\begin{proof}
Let $B_0 \in G_{\nu-\mu}(\m)$ be defined by
$B_0 = (b_{\kappa\lambda}) =\left(\langle\widehat{\epsilon}_\kappa(0) , p_0 \, \epsilon_\lambda(0) \rangle_J \right)$,
with $\{\epsilon_\lambda(t)\}_{\lambda =1}^\nu$ and $\{\hat{\epsilon}_\kappa(t)\}_{\kappa =1}^\nu$ normal parallel frames along $x(t)$ and $\widehat{x}(t)$, respectively.
Then $p(t)$ satisfies
$b_{\kappa\lambda} = \langle\widehat{\epsilon}_\kappa(t) , p(t) \, \epsilon_\lambda(t) \rangle_J$,
by Lemma~\ref{constantM}, and it is uniquely determined by this.
\end{proof}


\section{Distributions  associated to intrinsic rolling}\label{sec:dist}

The aim of this Section is to formulate the kinematic conditions for rolling without slipping and without twisting in terms of a distribution or subbundle of $T(Q \oplus P_{\iota,\widehat{\iota}})$. In this setting, a rolling will be an absolutely continuous curve in the configuration space $Q \oplus P_{\iota,\widehat{\iota}}$ tangent to the distribution almost everywhere. Namely, the kinematic conditions of no-slip and no-twist will force this curve to be tangent to the distribution.


\subsection{Local trivialization of $Q\oplus P_{\iota,\widehat{\iota}}$ and the tangent space of $G_{\mu}(m)$} \label{local triv}

Let
\begin{equation}\label{bundle_QP}
\pi\colon Q \oplus P_{\iota,\widehat{\iota}} \to M \times \widehat{M}
\end{equation}
denote the bundle for the rolling map. Consider a rolling curve
$$
\gamma = (q, p)\colon I\to Q \oplus P_{\iota,\widehat{\iota}}
$$
and assume that the interval $I$ is so small that $\pi\circ\gamma(I)=\left( x(I) , \widehat{x}(I) \right)\in U\times \widehat U$ and $U\in M$, $\widehat U\in\widehat M$ are chosen such that the bundle~\eqref{bundle_QP} trivializes when restricted to the domain $U \times \widehat{U}$. Thus there is a diffeomorphism $h$ defining the trivialization
\begin{equation} \label{trivi}
\begin{array}{rcl}
Q \oplus P_{\iota,\widehat{\iota}}\supset \pi^{-1}(U \times \widehat{U}) & \stackrel{h}{\to} & U \times \widehat{U} \times G_{\mu}(m) \times G_{\nu-\mu}(\m)
\\
(q(t), p(t)) & \mapsto & (x(t),\widehat{x}(t), A(t), B(t)),
\end{array}
\end{equation}
given by projections
$$x(t) = \pr_U (q(t),p(t)), \qquad \widehat{x}(t) = \pr_{\widehat{U}} (q(t),p(t)),\qquad t\in I,$$
$$A = \left( a_{ij} \right)_{i,j = 1}^m = \left( \langle q e_j, \hat{e}_i \rangle_J \right)_{i,j = 1}^m, \qquad
B = \left( b_{\kappa\lambda} \right)_{\kappa,\lambda = 1}^{\m} =
\left( \langle p \epsilon_\lambda, \hat{\epsilon}_\kappa \rangle_J \right)_{\kappa,\lambda = 1}^{\m}.$$
Here $\{ e_j \}_{j = 1}^m$, $\{ \epsilon_\lambda \}_{\lambda = 1}^\m$, $\{ \hat{e}_i \}_{i = 1}^m$ and $\{ \hat{\epsilon}_\kappa \}_{\kappa = 1}^\m$ are oriented orthonormal frames of vector fields of $TM|_U$, $T^\perp M|_U$, $T\widehat{M}|_{\widehat{U}}$ and $T^\perp\widehat{M}|_{\widehat{U}}$, respectively. Moreover, we assume that the first $\mu$ terms of $\{ e_j \}_{j = 1}^m$ and $\{ \hat{e}_i \}_{i = 1}^m$ are timelike. Corres\-pondingly, the first $\nu-\mu$ vector fields $\{ \epsilon_\lambda \}_{\lambda = 1}^\m$ and $\{ \hat{\epsilon}_\kappa \}_{\kappa = 1}^\m$ are also timelike.
The groups $G_{\mu}(m)$ and $G_{\nu-\mu}(\m)$ are chosen according to the desirable $G$-orientation properties of the rolling.

The kinematic conditions (I)-(III) are written as restrictions on the velocity vector
\begin{align*}
\dot{\gamma}(t)&=\big(\dot x(t), \dot{\widehat{x}}(t), \dot A(t), \dot B(t))\in T_{\gamma(t)}\pi^{-1}(U \times \widehat{U})
\\
&\cong T_{x(t)}U \times T_{\widehat x(t)}\widehat{U} \times T_{A(t)}G_{\mu}(m) \times T_{B(t)}G_{\nu-\mu}(\m).
\end{align*}

We recall the description of the tangent space  $TG_{\mu}(m)$ in terms of left and right invariant vector fields. The tangent space at the identity of $G_{\mu}(m)$, or the Lie algebra $\mathfrak g_{\mu}(m)$, is spanned by
\begin{equation}\label{basis}
\{ W_{ij}= \frac{\partial}{\partial a_{ij}} - \varepsilon_{i}\varepsilon_{j}\frac{\partial}{\partial a_{ji}}, \quad 1 \leq i < j \leq m\},
\end{equation}
 where
$$
\varepsilon_i=
\begin{cases}
-1 &\quad\text{if}\quad 1\leq i\leq\mu,
\\
1 &\quad\text{if}\quad \mu+1\leq i\leq m,
\end{cases},\quad\langle e_i,e_j\rangle_J=\langle \hat e_i,\hat e_j\rangle_J=\varepsilon_i\delta_{ij},
$$
and $\delta_{ij}$ is the Kronecker symbol.
The left and right invariant vector fields obtained by the translations of $W_{ij}(1)$ in (\ref{basis}) by $A\in G_{\mu}(m)$ are the following
\begin{equation} \label{SObasis}
A \cdot W_{ij}(1) = \sum_{r =1}^m \left(a_{ri} \frac{\partial}{\partial a_{rj}} - \varepsilon_{i}\varepsilon_{j}a_{rj} \frac{\partial}{\partial a_{ri}} \right)
\end{equation}\begin{equation} \label{SObasis_right}
W_{ij}(1) \cdot A= \sum_{r =1}^m \left(a_{js} \frac{\partial}{\partial a_{is}} - \varepsilon_{i}\varepsilon_{j}a_{is} \frac{\partial}{\partial a_{js}} \right).
\end{equation} See Appendix for the details.


\subsection{Distributions}


Now we are ready to rewrite the kinematic conditions (I)-(III) as a distribution over $ Q \oplus P_{\iota,\widehat{\iota}} $. Consider the image of $\gamma(t)$, satisfying the conditions (I)-(III), under the trivialization. Then
\begin{equation}\label{dotgamma}
\dot{\gamma}(t) = \dot{x}(t) + \dot{\widehat{x}}(t) + \sum_{i,j = 1}^m \dot{a}_{ij} \frac{\partial}{\partial a_{ij}}
+ \sum_{\kappa,\lambda = 1}^{\m} \dot{b}_{\kappa\lambda} \frac{\partial}{\partial b_{\kappa\lambda}}.
\end{equation}

We want to write the last two terms in~\eqref{dotgamma} in the left invariant bases of $TG_{\mu}(m)$ and $TG_{\nu-\mu}(\m)$, based on conditions (II) and (III).
We start from (II) and recall that according to coordinate representation of $A=\{a_{ij}\}=\{\langle q(t)e_j,\hat e_i\rangle_J\}$ in orthonormal bases $\{e_j\}_{j=1}^{n}$ and $\{\widehat e_j\}_{j=1}^{n}$ we obtain
$$q(t) e_j = \sum_{l =1}^m \varepsilon_l a_{lj}(t) \hat{e}_l,\quad\text{and}\quad
q^{-1}(t) \hat{e}_i =  \sum_{l=1}^m\varepsilon_l a_{il}(t)e_l.
$$
Condition (II) holds if and only if  $q \frac{D}{dt} e_j(x(t)) = \frac{D}{dt} qe_j(x(t))$,
which yields
\small{\begin{multline*}
0 = \left\langle q \frac{D}{dt} e_j(x(t))- \frac{D}{dt} q e_j(x(t)), \hat{e}_i \right\rangle_J
\\
= \left\langle \nabla_{\dot x(t)} e_j, q^{-1} \hat{e}_i \right\rangle_J
- \left\langle \sum_{l=1}^m \varepsilon_l\dot{a}_{lj} \hat{e}_l , \hat{e}_i \right\rangle_J
- \left\langle \sum_{l=1}^m \varepsilon_la_{lj} \nabla_{\dot{\hat x}(t)} \hat{e}_l, \hat{e}_i \right\rangle_J
\\
= \sum_{l = 1}^m \varepsilon_la_{il} \left\langle \nabla_{\dot x(t)} e_j, e_l \right\rangle_J
- \dot{a}_{ij} - \sum_{l=1}^m \varepsilon_la_{lj} \left\langle\nabla_{\dot{\hat x}(t)} \hat{e}_l, \hat{e}_i \right\rangle_J,
\end{multline*}}
for every $i,j =1, \dots, m$. Hence, the third term in~\eqref{dotgamma} can be written as
\begin{multline}\label{sumdist}
\hspace*{-0.4 cm}\sum_{i,j=1}^m \dot{a}_{ij} \frac{\partial}{\partial a_{ij}}
= \sum_{i,j=1}^m \left( \sum_{l = 1}^m\varepsilon_l a_{il} \left\langle \nabla_{\dot x(t)} e_j, e_l \right\rangle_J
 - \sum_{l=1}^m \varepsilon_la_{lj} \left\langle\nabla_{q\dot x(t)} \hat{e}_l, \hat{e}_i \right\rangle_J\right) \frac{\partial}{\partial a_{ij}}
 \\
= \sum_{j,l=1}^m \left\langle \nabla_{\dot x(t)} e_j, e_l \right\rangle_J \varepsilon_l\sum_{i=1}^ma_{il}\frac{\partial}{\partial a_{ij}}
 - \sum_{i,l=1}^m \left\langle\nabla_{q\dot x(t)} \hat{e}_l, \hat{e}_i \right\rangle_J \varepsilon_l\sum_{j=1}^{m}a_{lj}\frac{\partial}{\partial a_{ij}}
 \\
 = \sum_{j,l=1}^m \left\langle \nabla_{\dot x(t)} e_j, e_l \right\rangle_J \varepsilon_lA \cdot \frac{\partial}{\partial a_{lj}}
 - \sum_{i,l=1}^m \left\langle\nabla_{q\dot x(t)} \hat{e}_l, \hat{e}_i \right\rangle_J \varepsilon_l\frac{\partial}{\partial a_{il}} \cdot A
 \\
 = \sum_{i,j=1}^m \left\langle \nabla_{\dot x(t)} e_j, e_i \right\rangle_J \varepsilon_iA \cdot \frac{\partial}{\partial a_{ij}}
 - \sum_{i,j=1}^m \left\langle\nabla_{q\dot x(t)} \hat{e}_j, \hat{e}_i \right\rangle_J \varepsilon_j\frac{\partial}{\partial a_{ij}} \cdot A
\\
= \sum_{i,j=1}^m \left\langle \nabla_{\dot x(t)} e_j, e_i \right\rangle_J \varepsilon_iA \cdot \frac{\partial}{\partial a_{ij}}
 -
\hspace*{-0.3 cm}\sum_{i,j=1}^m \left\langle\nabla_{q\dot x(t)} \hat{e}_j, \hat{e}_i \right\rangle_J \varepsilon_j
 \sum_{r,s=1}^m  a_{js}\varepsilon_r\varepsilon_i a_{ir} A \cdot \frac{\partial}{\partial a_{rs}}
\\
= \sum_{i,j=1}^m \left\langle \nabla_{\dot x(t)} e_j, e_i \right\rangle_J \varepsilon_iA \cdot \frac{\partial}{\partial a_{ij}}\\
 - \sum_{r,s = 1}^m \left\langle \nabla_{q\dot x(t)} (\sum_{j=1}^{m}\varepsilon_j a_{js}\hat{e}_j), (\sum_{i =1}^m \varepsilon_i a_{ir} \hat{e}_i) \right\rangle_J  \varepsilon_r A \cdot \frac{\partial}{\partial a_{rs}}
   \end{multline}
\begin{multline*}
=\sum_{i,j=1}^m \left\langle \nabla_{\dot x(t)} e_j, e_i \right\rangle_J \varepsilon_iA \cdot \frac{\partial}{\partial a_{ij}}
 - \sum_{r,s = 1}^m \left\langle \nabla_{q\dot x(t)} q e_s, q e_r \right\rangle_J\varepsilon_r  A \cdot \frac{\partial}{\partial a_{rs}}
 \\
 =\sum_{i,j=1}^m \left(\left\langle \nabla_{\dot x(t)} e_j, e_i \right\rangle_J  -  \left\langle \nabla_{q\dot x(t)} q e_j, q e_i \right\rangle_J\right)\varepsilon_i  A \cdot \frac{\partial}{\partial a_{ij}}.
\end{multline*}
Interchanging the indices $i$ and $j$ and noticing that the coefficients are skew symmetric, we get
\begin{equation}\label{sumdist1}
\sum_{i,j=1}^m \dot{a}_{ji} \frac{\partial}{\partial a_{ji}}
= \sum_{i,j=1}^m \left(\left\langle \nabla_{\dot x(t)} e_j, e_i \right\rangle_J - \left\langle \nabla_{q\dot x(t)} q e_j, q e_i \right\rangle_J\right)(-1)\varepsilon_j  A \cdot \frac{\partial}{\partial a_{ji}}.
\end{equation}
Summing~\eqref{sumdist} and~\eqref{sumdist1}  we deduce
\begin{equation} \label{A left}
\sum_{i,j=1}^m \dot{a}_{ij} \frac{\partial}{\partial a_{ij}}
= \sum_{i<j} \left(\left\langle \nabla_{\dot x(t)} e_j, e_i \right\rangle_J
 - \left\langle \nabla_{q\dot x(t)} q e_j, q e_i \right\rangle_J \right)\varepsilon_i A\cdot W_{ij}.
 \end{equation}
Written the same in a right invariant basis, we obtain
\begin{equation*}
\sum_{i,j=1}^m \dot{a}_{ij} \frac{\partial}{\partial a_{ij}}
= \sum_{i<j} \left(\left\langle \nabla_{\dot x(t)} q^{-1} \hat{e}_j, q^{-1} \hat{e}_i \right\rangle_J
 -  \left\langle \nabla_{q\dot x(t)} \hat{e}_j, \hat{e}_i \right\rangle_J \right) \varepsilon_i\big[\text{Ad}(A^{-1})\big] A\cdot W_{ij}.
 \end{equation*}

Similarly, (III) holds if and only if
\begin{multline}\label{Bleftright}
\sum\limits_{\kappa,\lambda=1}^\m \dot{b}_{\kappa\lambda} \frac{\partial}{\partial b_{\kappa\lambda}}
= \sum\limits_{\kappa <\lambda} \left(\left\langle \nabla_{\dot x(t)}^{\perp} \epsilon_\lambda, \epsilon_\kappa \right\rangle_J
 - \left\langle \nabla_{q\dot x(t)}^\perp p \epsilon_\lambda, p \epsilon_\kappa \right\rangle_J \right)\varepsilon_{\kappa} B\cdot W_{\kappa\lambda}. \\
= \sum\limits_{\kappa <\lambda} \left(
\left\langle \nabla_{\dot x(t)}^{\perp} p^{-1} \hat{\epsilon}_\lambda, p^{-1} \hat{\epsilon}_\kappa \right\rangle_J
 - \left\langle \nabla_{q\dot x(t)}^\perp \hat{\epsilon}_\lambda, \hat{\epsilon}_\kappa \right\rangle_J \right)\varepsilon_{\kappa}
\big[\text{Ad}(B^{-1})\big] B\cdot W_{\kappa\lambda}.
 \end{multline}

It may seem that all of the coefficients of $W_{ij}(A)$
in~\eqref{A left} vanish
from conditions (II). This is not true, however, due to  the subtle difference between the covariant derivative $\frac{D}{dt}$ along the curve $\widehat{x}(t)$ and the covariant derivative $\nabla_{\dot{\widehat{x}}(t)}$ along the vector fields $\dot{\widehat{x}}(t)$. Indeed, notice that
$$\frac{D}{dt} a_{sj}(t) \hat{e}_s(\widehat{x}(t)) = \dot{a}_{sj}(t) \hat{e}_s(\widehat{x}(t)) + a_{sj}(t) \frac{D}{dt} \hat{e}_s(\widehat{x}(t))
$$
and since $\{\hat{e}_s\}_{s=1}^{m}$ is extendable in a neighborhood of $\widehat{x}(t)$ we can continue and get
$$ =\dot{a}_{sj}(t) \hat{e}_s(\widehat{x}(t)) + a_{sj}(t) \nabla_{\dot{\widehat{x}}(t)}\hat{e}_s(\widehat{x}(t))
=\dot{a}_{sj} \hat{e}_s(\widehat{x}(t)) + a_{sj} \nabla_{q \dot{x}(t)}\hat{e}_s(\widehat{x}(t)).$$
While $\nabla_{\dot{\widehat x}(t)} a_{sj}(t) \hat{e}_s(x(t)) = a_{sj}(t) \nabla_{\dot{\widehat x}(t)} \hat{e}_s(x(t))$ due to the $\mathbb R$ linearity of the connection and the fact that the function $a_{sj}(t)$ depends on $t$ and is not defined as a function on $M$.
Similar relations hold for $\frac{D^{\perp}}{dt}$.

Observe that, due to the expressions~\eqref{A left} and~\eqref{Bleftright},  the vector field $\dot\gamma$ along $\gamma\in Q \oplus P_{\iota,\widehat{\iota}}$ can be considered as a "non-twisted lift" of the vector field $\dot x$ along the curve $x\in M$. We generalize this property on any local vector field on $M$.

\begin{definition}\label{defdistr} Non-twisted lifts of a vector field $X$ on $U\subset M$ are the vector fields
$\mathcal{V}(X)$ and $\mathcal{V}^{\perp}(X)$ on $\pi^{-1}(U\times\widehat U)\subset Q \oplus P_{\iota,\widehat{\iota}}$ satisfying
\begin{equation} \label{Vunderh}
dh \left(\mathcal{V}(X)(q,p) \right) = \sum_{i<j} \left(\left\langle \nabla_{X} e_j, e_i \right\rangle_J
 - \left\langle \nabla_{qX} q e_j, q e_i \right\rangle_J \right)\varepsilon_i A\cdot W_{ij}.
 \end{equation}
\begin{equation} \label{Vperpunderh}
dh \left(\mathcal{V}^{\perp}(X)(q,p)\right)
= \sum_{\kappa <\lambda} \left(\left\langle \nabla_{X}^{\perp} \epsilon_\lambda, \epsilon_\kappa \right\rangle_J
 - \left\langle \nabla_{qX}^\perp p \epsilon_\lambda, p \epsilon_\kappa \right\rangle_J \right) \varepsilon_{\kappa} B\cdot W_{\kappa\lambda}.
 \end{equation}
 for any local trivialization $h$ as in~\eqref{trivi} and any $(q,p) \in \pi^{-1}(U\times\widehat U)$.
 \end{definition}

Notice that since the covariant derivative along a vector field $X$ depends only on the value $X(x)$ at $x\in U\subset M$ we conclude that if $Y(x) = X(x) = v_x \in T_xM$, then the lifts $\mathcal{V}(Y)(q,p) = \mathcal{V}(X)(q,p)$
for every $(q,p) \in (Q \oplus P_{\iota,\widehat{\iota}})_{x\times\widehat x}$. Hence, we may define the lift
$\mathcal{V}(v_x)(q,p)$ for any vector $v_x \in T_xM$ and $(q,p) \in  (Q \oplus P_{\iota,\widehat{\iota}})_{x\times\widehat x}$.
The no-slip conditions imply that $qv_x\in T_{\widehat x}\widehat M$. Also notice that the map $X \mapsto \mathcal{V}(X)$ is linear. The same holds for $X \mapsto \mathcal{V}^{\perp}(X)$. This leads to the definition of the distributions contained in the following propositions.

\begin{proposition}
A curve $(q(t),p(t))$ in $Q \oplus P_{\iota,\widehat{\iota}}$ is a rolling if and only if it is a horizontal curve
with respect to the distribution $E$, defined by $$E_{(q,p)} = \left\{v_x + qv_x + \mathcal{V}(v_x)(q,p) + \mathcal{V}^\perp(v_x)(q,p) | \, v_x \in T_xM \right\},$$
where $(q,p) \in  (Q \oplus P_{\iota,\widehat{\iota}})_{x\times\widehat x}$.
\end{proposition}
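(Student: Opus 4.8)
The plan is to prove the equivalence pointwise along the curve, inside a local trivialization $h$ as in~\eqref{trivi}, reading off everything from the computations already carried out in this section up to~\eqref{A left} and~\eqref{Bleftright}. The central observation I would use is this: in the trivialized coordinates, $\dot\gamma(t)$ decomposes into the four blocks recorded in~\eqref{dotgamma}, namely $\dot x(t)$, $\dot{\widehat x}(t)$, $\sum_{i,j}\dot a_{ij}\,\partial/\partial a_{ij}$ and $\sum_{\kappa,\lambda}\dot b_{\kappa\lambda}\,\partial/\partial b_{\kappa\lambda}$; and, taking $v_x := \dot x(t) \in T_{x(t)}M$, the image under $dh$ of the generic element $v_x + qv_x + \mathcal{V}(v_x)(q,p) + \mathcal{V}^\perp(v_x)(q,p)$ of $E_{(q,p)}$ has the corresponding four blocks $v_x$, $qv_x$, the right-hand side of~\eqref{Vunderh}, and the right-hand side of~\eqref{Vperpunderh}. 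Matching the blocks one by one is: automatic for the $T_xM$-block (it only names $v_x=\dot x(t)$), condition (I) for the $T\widehat M$-block, condition (II) for the $T_AG_{\mu}(m)$-block, and condition (III) for the $T_BG_{\nu-\mu}(\m)$-block.

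For the direction \emph{rolling $\Rightarrow$ horizontal}, I would fix $t$ where $\dot\gamma(t)$ and the relevant covariant derivatives exist, put $v_x=\dot x(t)$, and check the four blocks. The first is trivial; the no-slip condition (I) gives $\dot{\widehat x}(t)=q(t)\dot x(t)=qv_x$, settling the second; for the third I would invoke the chain of equalities ending in~\eqref{A left}, which says that (II) is equivalent to $\sum_{i,j}\dot a_{ij}\,\partial/\partial a_{ij}=\sum_{i<j}\bigl(\langle\nabla_{\dot x(t)}e_j,e_i\rangle_J-\langle\nabla_{q\dot x(t)}qe_j,qe_i\rangle_J\bigr)\varepsilon_i\,A\cdot W_{ij}$, and since the covariant derivative along a vector field depends only on its value at the point, the right-hand side is precisely $dh(\mathcal{V}(v_x)(q,p))$ by Definition~\ref{defdistr} and~\eqref{Vunderh}; the fourth block is handled the same way using~\eqref{Bleftright} and~\eqref{Vperpunderh}. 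Summing the blocks and applying $dh^{-1}$ then gives $\dot\gamma(t)=v_x+qv_x+\mathcal{V}(v_x)(q,p)+\mathcal{V}^\perp(v_x)(q,p)\in E_{(q(t),p(t))}$.

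For the converse, if $\gamma=(q,p)$ is horizontal then for almost every $t$ there is $w_x\in T_{x(t)}M$ with $\dot\gamma(t)=w_x+qw_x+\mathcal{V}(w_x)(q,p)+\mathcal{V}^\perp(w_x)(q,p)$. Comparing with~\eqref{dotgamma}, the $T_xM$-block forces $w_x=\dot x(t)$, whence the $T\widehat M$-block gives $\dot{\widehat x}(t)=q(t)\dot x(t)$, i.e.\ (I); the $T_AG_{\mu}(m)$-block now reads $\sum_{i,j}\dot a_{ij}\,\partial/\partial a_{ij}=dh(\mathcal{V}(\dot x(t))(q,p))$, which by the same equivalence read backwards is (II), and likewise the $T_BG_{\nu-\mu}(\m)$-block gives (III). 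So $(q,p)$ satisfies (I)--(III) and is a rolling in the sense of Definition~\ref{imbeddef3}.

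Finally, I would add two short justifications that need no new computation. The subspace $E_{(q,p)}$ is well defined independently of the chosen trivialization and of the extension of $v_x$ to a local vector field, because the right-hand sides of~\eqref{Vunderh} and~\eqref{Vperpunderh} are $\mathbb R$-linear in $X$ and depend on $X$ only through $X(x)$, as noted just after Definition~\ref{defdistr}. The only genuinely delicate point, and the place where care is needed, is the one flagged before~\eqref{A left}: one must keep the covariant derivative $\frac{D}{dt}$ along the curve $\widehat x(t)$ distinct from the covariant derivative $\nabla_{\dot{\widehat x}(t)}$ along the vector field $\dot{\widehat x}(t)$, and it is precisely this discrepancy that makes the coefficients in~\eqref{A left} and~\eqref{Bleftright} generically nonzero, so that $E$ genuinely encodes the no-slip and no-twist kinematics rather than collapsing onto a flat horizontal distribution.
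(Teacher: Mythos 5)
Your proof is correct and follows exactly the route the paper intends: the proposition is stated there without a separate proof precisely because it is the block-by-block comparison of~\eqref{dotgamma} with~\eqref{A left}, \eqref{Bleftright} and Definition~\ref{defdistr} that you spell out, with (I) matched to the $T\widehat M$-block and (II), (III) to the group blocks. Your care in establishing (I) first in both directions is exactly what legitimizes replacing $\nabla_{\dot{\widehat x}(t)}$ by $\nabla_{q\dot x(t)}$ in the coefficients, so nothing is missing.
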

\begin{proposition}\label{propdist}
A curve $q(t)$ in $Q$ is an intrinsic rolling if and only if it is a horizontal curve
with respect to the distribution $D$, defined by $$D_q = \left\{v_x + qv_x + \mathcal{V}(v_x)(q) | \, v_x \in T_xM \right\},\quad q \in Q_{x\times\widehat x}.$$
\end{proposition}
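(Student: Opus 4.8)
The plan is to reduce the statement to the local coordinate computation already carried out in Section~\ref{sec:dist} that produced~\eqref{A left}. Since both the notion of intrinsic rolling in Definition~\ref{intrinsdef} and the distribution $D$ are local over $M\times\widehat M$, I would fix a local trivialization $h$ as in~\eqref{trivi}, but restricted to the bundle $Q$ alone, i.e. forgetting the $P_{\iota,\widehat\iota}$-factor together with the normal frames $\{\epsilon_\lambda\}$, $\{\hat\epsilon_\kappa\}$. A curve $q\colon I\to Q$ over $(x(t),\widehat x(t))$ then corresponds to $(x(t),\widehat x(t),A(t))$ with $A(t)=\big(\langle q(t)e_j,\hat e_i\rangle_J\big)_{i,j=1}^m$, and its velocity satisfies $dh(\dot q(t)) = \dot x(t)+\dot{\widehat x}(t)+\sum_{i,j=1}^m\dot a_{ij}(t)\frac{\partial}{\partial a_{ij}}$, exactly as in~\eqref{dotgamma} without the $\dot b_{\kappa\lambda}$-terms. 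Comparing with Definition~\ref{defdistr} and the definition of $D_q$, the assertion ``$\dot q(t)\in D_{q(t)}$'' becomes the existence of a vector $v\in T_{x(t)}M$ with $v=\dot x(t)$, with $q(t)v=\dot{\widehat x}(t)$, and with $\sum_{i,j}\dot a_{ij}(t)\frac{\partial}{\partial a_{ij}}=dh\big(\mathcal V(\dot x(t))(q(t))\big)$, where the last right-hand side is given by~\eqref{Vunderh} using any vector field $X$ on $U$ extending $\dot x(t)$ (the result being independent of $X$ and of $h$, by the paragraph following Definition~\ref{defdistr}).

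The next step is to identify these three conditions with (I') and (II'). The first two together are literally the no-slip condition $\dot{\widehat x}(t)=q(t)\dot x(t)$, that is (I'). For the third, note that (II') is equivalent to the covariant-derivative form $q(t)\frac{D}{dt}Z(t)=\frac{D}{dt}q(t)Z(t)$ for every tangent vector field $Z$ along $x(t)$ --- this is the argument of Proposition~\ref{new-twist}, which uses only that $q(t)\colon T_{x(t)}M\to T_{\widehat x(t)}\widehat M$ is a linear isometry, hence an isomorphism --- and by $\mathbb R$-linearity and the Leibniz rule it is enough to test this identity on the frame fields $Z=e_j$. The chain of equalities in Section~\ref{sec:dist} running from the identity $q\frac{D}{dt}e_j(x(t))=\frac{D}{dt}qe_j(x(t))$ down to~\eqref{A left} shows that this holds precisely when the $\dot a_{ij}(t)$ satisfy~\eqref{A left}, which is exactly the third condition above. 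This yields both implications simultaneously: if $q$ is an intrinsic rolling then (I') and (II') hold, so $\dot q(t)\in D_{q(t)}$ for almost every $t$; conversely, reading off the components of a horizontal velocity $\dot q(t)$ gives (I') and the frame version of (II), hence (II'), so $q$ is an intrinsic rolling.

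The only genuine subtlety --- and the step I expect to be the main obstacle if one writes out the details --- is that the coefficients $\langle\nabla_{\dot x}e_j,e_i\rangle_J-\langle\nabla_{q\dot x}qe_j,qe_i\rangle_J$ of $A\cdot W_{ij}$ in~\eqref{A left} do \emph{not} vanish, in spite of the no-twist identity. This rests on carefully distinguishing the covariant derivative $\frac{D}{dt}$ along $\widehat x(t)$ --- which, applied to $a_{sj}(t)\hat e_s(\widehat x(t))$, contributes the extra term $\dot a_{sj}(t)\hat e_s$ --- from $\nabla_{\dot{\widehat x}(t)}$, which sees $a_{sj}(t)$ as a scalar function of $t$ only. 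This point is already settled in the discussion just before Definition~\ref{defdistr}, so I would cite it rather than reproduce the computation. The remaining verifications --- well-definedness of the lift $\mathcal V(v_x)(q)$ and linearity of $X\mapsto\mathcal V(X)$ --- are also taken care of there.
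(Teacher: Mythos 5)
Your proposal is correct and follows essentially the same route as the paper: Proposition~\ref{propdist} is stated there without a separate proof precisely because it is the direct repackaging of the Section~\ref{sec:dist} computation (the derivation of~\eqref{A left} from condition (II) in the local trivialization, together with the remark distinguishing $\frac{D}{dt}$ from $\nabla_{\dot{\widehat x}(t)}$), which is exactly what you assemble, including the reduction of (II') to (II) via the argument of Proposition~\ref{new-twist} and the frame/Leibniz reduction.
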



\section{Causal character of the rolling}\label{Causal character}

The specific feature of pseudo-Riemannian manifolds is the causal structure, or division of all vectors into three classes timelike, spacelike and nullike (or lightlike for the metric of index one). It is easy to see the following

\begin{proposition}
If a rolling curve $x\colon I\to M$ is of one of the causal types, then the development curve $\hat x$ is of the same type.
\end{proposition}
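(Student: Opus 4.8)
The plan is to use the no-slip condition (I') directly together with the fact that the rolling map $q(t)$ is a linear isometry at each time. Recall that a rolling $q\colon I\to Q$ (or its extrinsic extension) satisfies $\dot{\widehat x}(t)=q(t)\dot x(t)$ for almost every $t$, where $q(t)\colon T_{x(t)}M\to T_{\widehat x(t)}\widehat M$ is an element of $G(T_{x(t)}M,T_{\widehat x(t)}\widehat M)$, hence in particular a linear isometry with respect to the induced metrics. Consequently, for almost every $t$ we have
\begin{equation*}
\langle \dot{\widehat x}(t),\dot{\widehat x}(t)\rangle_J=\langle q(t)\dot x(t),q(t)\dot x(t)\rangle_J=\langle \dot x(t),\dot x(t)\rangle_J.
\end{equation*}

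First I would note that since $x$ is absolutely continuous and $q$ is absolutely continuous, the function $t\mapsto\langle\dot x(t),\dot x(t)\rangle_J$ is defined almost everywhere, and by the displayed identity it agrees almost everywhere with $t\mapsto\langle\dot{\widehat x}(t),\dot{\widehat x}(t)\rangle_J$. If the rolling curve $x$ is \emph{spacelike}, meaning $\langle\dot x(t),\dot x(t)\rangle_J>0$ for (almost) all $t$ where $\dot x(t)$ is defined, then the same strict inequality holds for $\langle\dot{\widehat x}(t),\dot{\widehat x}(t)\rangle_J$, so $\widehat x$ is spacelike; the timelike case ($<0$) and the null case ($=0$) are handled identically by replacing the inequality with the appropriate one. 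This covers all three causal types.

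The only point requiring a small remark is that for the null case one should also check $\dot{\widehat x}(t)\neq 0$ whenever $\dot x(t)\neq 0$, which is immediate since $q(t)$ is a linear isomorphism; conversely a curve being ``null'' in the causal sense already presupposes the velocity is nonzero, so this is automatic. I do not anticipate any genuine obstacle here: the statement is an immediate consequence of the isometric nature of $q(t)$ built into the definition of $Q$ together with the no-slip condition, and the argument is the abstract counterpart of the computation carried out explicitly for the Lorentzian sphere in Section~\ref{causality} and for hyperquadrics in Section~\ref{Symmetric spaces}.
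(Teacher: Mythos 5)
Your proposal is correct and follows essentially the same route as the paper: the no-slip condition $\dot{\widehat x}(t)=q(t)\dot x(t)$ combined with the fact that $q(t)$ is a linear isometry gives $\langle\dot{\widehat x}(t),\dot{\widehat x}(t)\rangle_J=\langle\dot x(t),\dot x(t)\rangle_J$, hence the causal character is preserved. Your extra remarks on the almost-everywhere qualification and the null case are harmless elaborations of the same one-line argument.
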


\begin{proof} Since the map $q\colon T_{x(t)}M\to T_{\hat x(t)}\widehat M$ is an isometry then the no-slip condition implies
$
\langle\dot{\hat x}(t),\dot{\hat x}(t)\rangle_J = \langle q(t)\dot{ x}(t), q(t)\dot{x}(t)\rangle_J =\langle\dot{x}(t),\dot{x}(t)\rangle_J.
$
\end{proof}

The pseudo-orthogonal group also admits the scalar product as was mentioned in Subsection~\ref{causality} that we denoted by $\langle\langle .\,,.\rangle\rangle_J$. Under the local trivialization $h$ as in Subsection~\ref{local triv} a rolling curve takes the form $h(q(t),p(t))=\gamma(t)=\big(x(t),\hat x(t)), A(t),B(t)\big)$, $t\in I$.
We know that the curves $x$ and $\hat x$ have the same causal character. We ask whether the curves $A\in G_{\mu}(m)$ and $B\in G_{\nu-\mu}(\m)$ have the same causal character? As we saw for the benchmark example, the Lorentzian sphere, and for the symmetric spaces it is true under the classical rolling. In the following theorem we give a partial answer to this question in general case.
\begin{theorem}
If $\gamma(t)=\big(x(t),\hat x(t), A(t),B(t)\big)$ is a rolling curve under the local trivialization then the causal character of curves $A(t)$, $B(t)$ can be calculated as follows.
The curve $A$ is timelike $($spacelike or null$)$ if the expression
$$
\sum_{i,h=1}^{m}\varepsilon_i\varepsilon_h\Big(\sum_{k=1}^{m}\big[\dot x^k\Gamma^{i}_{kh}-c_{ih}\dot{\widehat x^k}\widehat\Gamma^{i}_{kh}\big]\Big)^2,
$$ is negative $($positive or zero$)$, respectively.
Here $c_{ih}=\sum_{r,s=1}^{m}\varepsilon_r\varepsilon_sa_{rh}a_{si}$.
The curve $B$ is timelike $($spacelike or null$)$ if the expression
$$\sum_{\kappa,\chi=1}^{\m}\varepsilon_{\kappa}\varepsilon_{\chi}\Big(\sum_{l=1}^{\m}\big[\dot x^{l}\big(\Gamma^{\perp}\big)^{\kappa}_{l\chi}-d_{\kappa \chi}\dot{\widehat x^l}\big(\widehat\Gamma^{\perp}\big)^{\kappa}_{l\chi}\big]\Big)^2
$$
is negative $($positive or zero$)$, respectively. Here $d_{\kappa\chi}=\sum_{\rho,\sigma=1}^{\m}\varepsilon_{\rho}\varepsilon_{\sigma}b_{\rho\chi}b_{\sigma\kappa}$.
\end{theorem}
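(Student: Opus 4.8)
The plan is to compute $\langle\langle \dot A(t),\dot A(t)\rangle\rangle_J$ and $\langle\langle \dot B(t),\dot B(t)\rangle\rangle_J$ explicitly, starting from the expressions~\eqref{A left} and~\eqref{Bleftright} for $\dot\gamma$ in terms of the left-invariant basis $W_{ij}$. First I would use the $\Orth_\mu(m)$-invariance of $\langle\langle\cdot\,,\cdot\rangle\rangle_J$, established in Subsection~\ref{causality}, to reduce the computation of $\langle\langle \dot A,\dot A\rangle\rangle_J$ at a point $A(t)$ to the corresponding computation at the identity: if $\dot A(t) = A(t)\cdot \Xi(t)$ with $\Xi(t)=\sum_{i<j}\xi_{ij}(t)\,\varepsilon_i W_{ij}\in\mathfrak g_\mu(m)$, then $\langle\langle \dot A,\dot A\rangle\rangle_J = \langle\langle \Xi,\Xi\rangle\rangle_J = -\tr(\Xi^2)$. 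From the block/sign structure of $\mathfrak o_\mu(m)$ recalled in Subsection~\ref{causality}, the quantity $-\tr(\Xi^2)$ expands as a weighted sum of squares of the entries of $\Xi$, with the timelike block (mixed space–time indices) carrying a negative sign and the purely-spacelike and purely-timelike blocks carrying a positive sign; this is exactly the dichotomy that will produce the $\varepsilon_i\varepsilon_h$ factor in the final formula.

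The substantive step is to identify the coefficients $\xi_{ij}$ from~\eqref{A left}. Reading off that formula,
$$
\xi_{ij}(t) = \langle\nabla_{\dot x(t)} e_j, e_i\rangle_J - \langle\nabla_{q\dot x(t)} q e_j, q e_i\rangle_J .
$$
Here I would expand $\dot x(t) = \sum_k \dot x^k \partial_k$ in a coordinate chart and write $\langle\nabla_{\dot x} e_j,e_i\rangle_J = \sum_k \dot x^k\,\Gamma^{\,i}_{kj}$ (using the orthonormal frame together with the Christoffel symbols of the chart, and absorbing the $\varepsilon$-normalization into the definition of $\Gamma^{\,i}_{kj}$ as in the paper's convention), and similarly for the hatted term, where the transported frame $q e_j = \sum_l \varepsilon_l a_{lj}\hat e_l$ must be inserted. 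The cross term $\langle\nabla_{q\dot x}qe_j,qe_i\rangle_J$ then becomes $\sum_l \dot{\widehat x}^{\,l}$ times a contraction of two copies of the matrix $A$ against the hatted Christoffel symbols — precisely the combination that produces $c_{ih}=\sum_{r,s}\varepsilon_r\varepsilon_s a_{rh}a_{si}$ after a change of summation index. Substituting back into $-\tr(\Xi^2) = \sum_{i,h}\varepsilon_i\varepsilon_h\,\xi_{ih}^2$ (valid because $\Xi$ is $J$-skew, so $\Xi^2$ has diagonal entries $-\sum_h \varepsilon_i\varepsilon_h\xi_{ih}^2$ up to the sign bookkeeping) yields the stated expression for the causal type of $A$. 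The computation for $B$ is identical, reading the coefficients off~\eqref{Bleftright} instead, replacing $\nabla$ by the normal connection $\nabla^\perp$, the frame $\{e_j\}$ by $\{\epsilon_\lambda\}$, the Christoffel symbols by the normal Christoffel symbols $(\Gamma^\perp)^\kappa_{l\chi}$, and $A$ by $B$, so that $c_{ih}$ becomes $d_{\kappa\chi}=\sum_{\rho,\sigma}\varepsilon_\rho\varepsilon_\sigma b_{\rho\chi}b_{\sigma\kappa}$.

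I expect the main obstacle to be purely bookkeeping: keeping the $\varepsilon_i$ signs consistent through three places at once — the definition~\eqref{basis} of $W_{ij}$ (which already carries $\varepsilon_i\varepsilon_j$), the normalization of the orthonormal frame inside the Christoffel symbols, and the expansion of $-\tr(\Xi^2)$ in the indefinite metric $\langle\langle\cdot\,,\cdot\rangle\rangle_J$. A careful way to organize this is to first verify the identity $\langle\langle \sum_{i<j}\xi_{ij}\varepsilon_i W_{ij},\ \sum_{i<j}\xi_{ij}\varepsilon_i W_{ij}\rangle\rangle_J = \sum_{i,h}\varepsilon_i\varepsilon_h\,\xi_{ih}^2$ in isolation (this is a finite-dimensional linear-algebra lemma about the basis $\{W_{ij}\}$ and the trace form, independent of rolling), and only then feed in the geometric expression for $\xi_{ih}$. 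One should also double-check that the cross-term manipulation genuinely yields $c_{ih}$ with indices in the order stated and not its transpose; since $A$ need not be symmetric this matters, and it is the one spot where a sign or transposition error would survive into the final formula.
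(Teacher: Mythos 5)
Your proposal is correct and follows essentially the same route as the paper's proof: you read off the Lie-algebra-valued coefficient $U$ of $\dot A=A\cdot U$ from~\eqref{A left} (respectively~\eqref{Bleftright} for $B$), reduce $\langle\langle\dot A,\dot A\rangle\rangle_J$ to $-\tr(U^2)$, expand the coefficients as $\sum_k\dot x^k\Gamma^i_{kj}$ minus the $A$-contracted hatted term giving $c_{ih}$, and evaluate $-\tr(U^2)=\sum_{i,h}\varepsilon_i\varepsilon_h w_{ih}^2$. The only cosmetic difference is that you invoke the $G_\mu(m)$-invariance of $\langle\langle\cdot\,,\cdot\rangle\rangle_J$ where the paper verifies the same reduction directly from $A^{\mathbf t}JA=J$ and $JU^{\mathbf t}J=-U$.
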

\begin{proof}
We start from the general observation. If $A\colon I\to G_{\mu}(m)$ is a curve then $\dot A(t)=A(t)\cdot U(t)$, where $U$ is a curve in the Lie algebra of $G_{\mu}(m)$. Then
$$
\langle\langle \dot A(t),\dot A(t)\rangle\rangle_J=\tr(\dot A^J\dot A)=-\tr U^2,
$$
since $A^{\mathbf t}JA=J$ and $JU^{\mathbf t}J=-U$.

Under the local trivialization the derivative $\dot{\gamma}$ was presented as
$$
\dot{\gamma}(t) = \dot{x}(t) + \dot{\widehat{x}}(t) + \sum_{i,j = 1}^m \dot{a}_{ij} \frac{\partial}{\partial a_{ij}}
+ \sum_{\kappa,\lambda = 1}^{\m} \dot{b}_{\kappa\lambda} \frac{\partial}{\partial b_{\kappa\lambda}}=\dot{x}(t) + \dot{\widehat{x}}(t)+\dot A(t)+\dot B(t).
$$
We start from study of $\dot A$. According to~\eqref{A left} we have
$\dot A= A\cdot U$ with
$$U=\sum_{i<j} \left(\left\langle \nabla_{\dot x(t)} e_j, e_i \right\rangle_J
 - \left\langle \nabla_{q\dot x(t)} q e_j, q e_i \right\rangle_J \right)\varepsilon_i W_{ij}(1).
 $$
 We denote the coefficients of $\varepsilon_iW_{ij}(1)$ by $$w_{ij}=\left(\left\langle \nabla_{\dot x(t)} e_j, e_i \right\rangle_J
 - \left\langle \nabla_{q\dot x(t)} q e_j, q e_i \right\rangle_J \right).$$
 Observe $\langle \nabla_{\dot x(t)} e_j, e_i \rangle_J=\sum_{k=1}^{m}\dot x^k(t)\langle \nabla_{e_k} e_j, e_i \rangle_J=\sum_{k=1}^{m}\dot x^k(t)\Gamma_{kj}^i(x(t))$ and
 \begin{align*}
\langle \nabla_{q\dot x(t)} q e_j, q e_i \rangle_J & = \sum_{l=1}^{m}\dot {\widehat{x}^l}(t)\langle \nabla_{\widehat e_l} \sum_{r=1}^{m}\varepsilon_ra_{rj} \widehat e_j, \sum_{s=1}^{m}\varepsilon_sa_{si} \widehat e_i \rangle_J
\\
&=\sum_{s,r=1}^{m}\varepsilon_r\varepsilon_sa_{rj}a_{si}\sum_{l=1}^{m}\dot {\widehat{x}^l}(t)\widehat\Gamma_{lj}^{i}(x(t)),
\end{align*}
where $\Gamma_{kj}^i(x(t))$ and $\widehat\Gamma_{lj}^{i}(x(t))$ are Christoffel symbols of Levi-Civita connections for $M$ and $\hat M$ along curves $x$ and $\widehat x$, respectively.

Since for the trace we need only information about the diagonal terms of $U^2$ we find
$$\{U^2\}_{ii}=\sum_{h=1}^{m}\{U\}_{ih}\{U\}_{hi}=\sum_{h=1}^{m}\varepsilon_hw^2_{ih}\quad\text{for}\quad i=1,\ldots \mu.
$$
$$
\{U^2\}_{ii}=\sum_{h=1}^{m}\{U\}_{ih}\{U\}_{hi}=\sum_{h=1}^{m}-\varepsilon_hw^2_{ih}\quad\text{for}\quad i=\mu,\ldots m.
$$
Thus the trace is expressed as followed
$$
-\tr U^2=-\sum_{i,h=1}^{m}-\varepsilon_i\varepsilon_hw_{ih}^2=\sum_{i,h=1}^{m}\varepsilon_i\varepsilon_h\Big(\sum_{k=1}^{m}\big[\dot x^k\Gamma^{i}_{kh}-c_{ih}\dot{\widehat x^k}\widehat\Gamma^{i}_{kh}\big]\Big)^2,
$$
where $c_{ih}=\sum_{r,s=1}^{m}\varepsilon_r\varepsilon_sa_{rh}a_{si}$.

Analogously for $\dot B(t)=B(t)\cdot \mathcal U(t)$, where $B=\{b_{\kappa\lambda}\}_{\kappa\lambda=1}^{\m}$ is a curve in the group $G_{\nu-\mu}(\m)$ and $\mathcal U$ is a curve in the Lie algebra of $G_{\nu-\mu}(\m)$ we have
$$
-\tr \mathcal U^2=-\sum_{\kappa,\chi=1}^{\m}-\varepsilon_{\kappa}\varepsilon_{\chi}w_{\kappa \chi}^2=\sum_{\kappa,\chi=1}^{\m}\varepsilon_{\kappa}\varepsilon_{\chi}\Big(\sum_{l=1}^{\m}\big[\dot x^{l}\big(\Gamma^{\perp}\big)^{\kappa}_{l\chi}-d_{\kappa \chi}\dot{\widehat x^l}\big(\widehat\Gamma^{\perp}\big)^{\kappa}_{l\chi}\big]\Big)^2,
$$
where $d_{\kappa\chi}=\sum_{\rho,\sigma=1}^{\m}\varepsilon_{\rho}\varepsilon_{\sigma}b_{\rho\chi}b_{\sigma\kappa}$.
\end{proof}



\section{Extended configuration space}\label{extended}


In the present section we would like to describe the embedding of the configuration spaces $Q$ and $Q\oplus P_{i,\widehat i}$ into, so called, extended configuration spaces. One of main difficulties to work with $Q$ and $Q\oplus P_{i,\widehat i}$ is that these bundles are not principal bundles, they are just a fiber bundles whose typical fiber under the local trivialization is diffeomorphic to one of the groups $G_{\mu}(m)$ or $G_{\nu-\mu}(\m)$. The extended configuration spaces are the vector bundles and the fiber bundles $Q$ and $Q\oplus P_{i,\widehat i}$ form subbundles of them. This idea was quite successfully exploit in~\cite{ChKok}. We give necessary definitions.

We start from the configuration space $Q$. It is well known that the space $\Hom(V,W)$ of linear maps between two real vector spaces can be identified with the tensor product $V^*\otimes W$, where $V^*$ is the dual to $V$. Applying this to the vector spaces $V=T_xM$ and $W=T_{\widehat x}\widehat M$ we obtain the tensor product $T_x^*M\otimes T_{\widehat x}\widehat M$. Since we are interested in finding the configuration space over the product $M\times\widehat M$, we use the coordinate independent embeddings
$$
T^*_xM\subset T^*_{(x,\widehat x)}(M\times \widehat M)\cong T^*_xM\times T^*_{\widehat x}\widehat M\quad\text{ and}
$$
$$
T_{\widehat x}\widehat M\subset T_{(x,\widehat x)}(M\times \widehat M)\cong T_xM\times T_{\widehat x}\widehat M.
$$
 Therefore, the space $T_xM\otimes T_{\widehat x}\widehat M$ can be canonically included into the space
$$\mathcal T^1_1(M\times\widehat M)_{(x,\widehat x)}:=T^*_{(x,\widehat x)}(M\times \widehat M)\otimes T_{(x,\widehat x)}(M\times \widehat M)$$ of $(1,1)$-tensors at point $(x,\widehat x)\in M\times \widehat M$. Taking the disjoin union over $(x,\widehat x)\in M\times \widehat M$, we can consider $T^*M\otimes T\widehat M$ as a vector subbundle
\begin{equation}\label{Pi}
\Pi\colon T^*M\otimes T\widehat M\to M\times \widehat M
\end{equation}
of a tensor bundle $\mathcal T^1_1(M\times\widehat M)$. We claim that the bundle $\pi_Q\colon Q\to M\times \widehat M$ is a subbundle of~\eqref{Pi}. Indeed since the manifolds $M$ and $\widehat M$ are endowed with the metric, the configuration space $Q$ is defined as a following subset of $T^*M\otimes T\widehat M$:
$$
\begin{array}{ll}
Q_{(x,\widehat x)} =  \Big\{&  q\in(T^*M\otimes T\widehat M)_{(x,\widehat x)}\mid  (x,\widehat x)\in M\times \widehat M,\ q\text{ is an isometry }
\\
\smallskip
&   \text{preserving the chosen (space, time or space-time) orientation}\Big\}.
\end{array}
$$
Recall that there is a diffeomorphism $h_Q$ defining the trivialization
\begin{equation} \label{triviQ}
\begin{array}{rcl}
Q \supset \pi^{-1}_Q(U \times \widehat{U}) & \stackrel{h_Q}{\to} & U \times \widehat{U} \times G_{\mu}(m)
\\
q & \mapsto & (x,\widehat{x}, A).
\end{array}
\end{equation}
The local trivialization $h_Q$ of the bundle $\pi_Q\colon Q\to M\times \widehat M$ can be considered as a restriction of the local trivialization
$$
\Pi^{-1}(U \times \widehat{U}) \longrightarrow U \times \widehat{U}\times\mathfrak{gl}(m).
$$
If the metric is positive definite, then $G_{\mu}(m) $ is simply the group $\SO(m)$. The same arguments as in~\cite{ChKok,GGLM2} shows that $Q$ is a smooth subbundle of $T^*M\otimes T\widehat M$, and that the bundle $Q$ is not a $G_{\mu}(m)$-principle bundle in the case $m>2$.

Analogously, given the isometric embeddings $\iota\colon M\to\mathbb R^{m+\m}_{\nu}$ and $\widehat \iota\colon \widehat M\to\mathbb R^{m+\m}_{\nu}$ we define the vector bundle
\begin{equation}\label{PiPerp}
\Pi^{\perp}\colon T^{\perp*}M\otimes T^{\perp}\widehat M\to M\times \widehat M.
\end{equation}
Then the disjoint union $P_{\iota,\widehat\iota}$ of sets of all orientation preserving isometries $p\colon T^{\perp}_xM \to T^{\perp}_{\widehat x}\widehat M$ becomes the smooth subbundle of~\eqref{PiPerp}. The trivialization
\begin{equation} \label{triviP}
\begin{array}{rcl}
P_{\iota,\widehat\iota} \supset \pi^{-1}_{P_{\iota,\widehat\iota}}(U \times \widehat{U}) & \stackrel{h_Q}{\to} & U \times \widehat{U} \times G_{\nu-\mu}(\m)
\\
p & \mapsto & (x,\widehat{x}, B).
\end{array}
\end{equation}
can be also considered as a restriction of the local trivialization
$$
(\Pi^{\perp})^{-1}(U \times \widehat{U}) \longrightarrow U \times \widehat{U}\times\mathfrak{gl}(\m).
$$
We conclude that the fiber bundle $Q\oplus P_{\iota,\widehat\iota}$ is a smooth subbundle of the vector bundle
$$
\Pi\oplus\Pi^{\perp}\colon \Big(T^*M\otimes T\widehat M\Big)\oplus\Big(T^{\perp*}M\otimes T^{\perp}\widehat M\Big)\to M\times \widehat M
$$
and the trivialization~\eqref{trivi}
\begin{equation*}
\begin{array}{rcl}
Q\oplus P_{\iota,\widehat\iota} \supset \pi^{-1}(U \times \widehat{U}) & \stackrel{h}{\to} & U \times \widehat{U} \times G_{\mu}(m)\times G_{\nu-\mu}(\m)
\\
(q,p) & \mapsto & (x,\widehat{x}, A,B).
\end{array}
\end{equation*}
is the restriction of the trivialization
$$
(\Pi\oplus \Pi^{\perp})^{-1}(U \times \widehat{U}) \longrightarrow U \times \widehat{U}\times\mathfrak{gl}(m)\times \mathfrak{gl}(\m).
$$


\section{Appendix - The tangent space of $G_{\mu}(m)$}\label{tangent SOn}

We describe the tangent space  $TG_{\mu}(m)$ in terms of left and right invariant vector fields. Following the notation of Subsection~\ref{local triv}, we use the isomorphism $h$ to identify the tangent spaces under trivialization:
$$T\pi^{-1}(U \times \widehat{U}) \cong TU \times T\widehat{U} \times TG(\mathbb R^{m}_{\mu}) \times TG(\mathbb R^{\m}_{\nu-\mu}).$$

The tangent space at the identity of $G_{\mu}(m)$, or the Lie algebra $\mathfrak g_{\mu}(m)$,  is spanned by the skew symmetric part
$$W_{ij}= \frac{\partial}{\partial a_{ij}} - \frac{\partial}{\partial a_{ji}}, \quad\text{if}\quad 1 \leq i < j \leq \mu,\quad\text{or}\quad \mu+1 \leq i < j \leq m,$$
and the symmetric part
$$
W_{ij}= \frac{\partial}{\partial a_{ij}} + \frac{\partial}{\partial a_{ji}}, \quad\text{if}\quad 1 \leq i\leq\mu < j \leq m.
$$
We write the basis $W_{ij}$ in the homogeneous form by making use of the sign symbol given by the scalar product
$$
\langle e_i,e_j\rangle_J=\langle \hat e_i,\hat e_j\rangle_J=\varepsilon_i\delta_{ij},\quad
\varepsilon_i=
\begin{cases}
-1 &\quad\text{if}\quad 1\leq i\leq\mu,
\\
1 &\quad\text{if}\quad \mu+1\leq i\leq m,
\end{cases}
$$
where $\delta_{ij}$ is the Kronecker symbol. Thus
\begin{equation*}
\{ W_{ij}= \frac{\partial}{\partial a_{ij}} - \varepsilon_{i}\varepsilon_{j}\frac{\partial}{\partial a_{ji}}, \quad 1 \leq i < j \leq m\}
\end{equation*}
generates the tangent space of $G(\mathbb R^{m}_{\mu})$ at the identity. If we write~\eqref{basis} in the form $\varepsilon_i W_{ij}=\varepsilon_i\frac{\partial }{\partial a_{ij}}-\varepsilon_j\frac{\partial }{\partial a_{ji}}$, then we observe the property $\varepsilon_i W_{ij}=-\varepsilon_j W_{ji}$.

Since the left and right action of $G_{\mu}(m)$ on the tangent space $TG_{\mu}(m)$ is described by
$$A \cdot \frac{\partial}{\partial a_{ij}} = \sum_{r=1}^m a_{ri} \frac{\partial}{\partial a_{rj}}, \qquad
\frac{\partial}{\partial a_{ij}} \cdot A = \sum_{s =1}^m a_{js} \frac{\partial}{\partial a_{is}}.$$
then, the left and right translations by $A\in G_{\mu}(m)$ of the basis elements in (\ref{basis}) defines vectors
\begin{equation*}
A \cdot W_{ij}(1) = \sum_{r =1}^m \left(a_{ri} \frac{\partial}{\partial a_{rj}} - \varepsilon_{i}\varepsilon_{j}a_{rj} \frac{\partial}{\partial a_{ri}} \right)
\end{equation*}
as a global left invariant basis of $TG_{\mu}(m)$
and
\begin{equation*} 
W_{ij}(1) \cdot A= \sum_{r =1}^m \left(a_{js} \frac{\partial}{\partial a_{is}} - \varepsilon_{i}\varepsilon_{j}a_{is} \frac{\partial}{\partial a_{js}} \right)
\end{equation*}
as a global right invariant basis of $TG_{\mu}(m)$.

We want to present the formula expressing the left invariant basis $A \cdot W_{ij}(1)$ in terms of the right invariant basis $W_{ij}(1) \cdot A$ and vice versa.
Recall the notation $A^J=JA^tJ$, and observe that the multiplication from the left by $J=\diag(I_{\mu},I_{\nu-\mu})$ changes the sign of the first $\mu$ rows and the multiplication from the right by $J$ change the sign of the first $\nu-\mu$ columns. Therefore, for $A=\{a_{ij}\}$, we have $A^J=\{a_{ij}^J\}=\{\varepsilon_{i}\varepsilon_{j}a_{ji}\}$. Then
$$
\{A^JA\}_{jl}=\sum_{s=1}^{m}a^J_{js}a_{sl}=\sum_{s=1}^{m}\varepsilon_{j}\varepsilon_{s}a_{sj}a_{sl}=\delta_{lj},$$
or
$$
\{AA^J\}_{jl}=\sum_{s=1}^{m}a_{js}a_{sl}^J=\sum_{s=1}^{m}\varepsilon_{l}\varepsilon_{s}a_{js}a_{ls}=\delta_{lj}.
$$
Thus we obtain the following formula to switch from left to right translation
$$A \cdot \frac{\partial}{\partial a_{ij}} = \sum_{r=1}^m a_{ri} \frac{\partial}{\partial a_{rj}}
= \sum_{l, r =1}^m a_{ri} \delta_{jl} \frac{\partial}{\partial a_{rl}}
= \sum_{l, r, s =1}^m \varepsilon_{l}\varepsilon_{s}a_{ri} a_{sj} a_{sl} \frac{\partial}{\partial a_{rl}}$$
$$= \sum_{r,s=1}^m \varepsilon_{l}\varepsilon_{s}a_{ri} a_{sj} \left(\frac{\partial}{\partial a_{rs}} \cdot A \right),$$
and the other way around,
$$\frac{\partial}{\partial a_{ij}} \cdot A = \sum_{s =1}^m a_{js} \frac{\partial}{\partial a_{is}}
= \sum_{l, s =1}^m a_{js} \delta_{li} \frac{\partial}{\partial a_{ls}}
= \sum_{l, r, s =1}^m \varepsilon_{r}\varepsilon_{i}a_{js} a_{lr} a_{ir} \frac{\partial}{\partial a_{ls}}$$
$$= \sum_{r, s =1}^m \varepsilon_{r}\varepsilon_{i}a_{js} a_{ir} \left(A \cdot \frac{\partial}{\partial a_{rs}}\right).$$

Moreover
\begin{equation}\label{first}
\Big(\frac{\partial}{\partial a_{ij}}-\varepsilon_{i}\varepsilon_{j}\frac{\partial}{\partial a_{ji}}\Big)\cdot A =
\sum_{r,s=1}^{m} \varepsilon_{i}\varepsilon_{r}\big(a_{js} a_{ir} -a_{is} a_{jr})\big) A\cdot\frac{\partial}{\partial a_{rs}}
\end{equation}
and from other side interchanging $r$ and $s$ we obtain
\begin{eqnarray}\label{second}
\Big(\frac{\partial}{\partial a_{ij}}-\varepsilon_{i}\varepsilon_{j} \frac{\partial}{\partial a_{ji}}\Big)\cdot A=
\sum_{r,s=1}^{m} \varepsilon_{s}\varepsilon_{i}\big(a_{jr} a_{is} -a_{ir} a_{js})\big) A\cdot\frac{\partial}{\partial a_{sr}}.
\end{eqnarray}
Summing~\eqref{first} and~\eqref{second} and observing that $\varepsilon_iW_{ij}=-\varepsilon_jW_{ji}$, we get for $i<j$
$$
W_{ij}(1) \cdot A =\sum_{r<s} \varepsilon_{i}\varepsilon_{r}\big(a_{js} a_{ir} -a_{is} a_{jr}\big)A\cdot W_{rs}(1).
$$
We also notice that
$$W_{ij}(1) \cdot A = \mathrm{Ad}(A^{-1}) W_{ij} (A),\quad\text{with}\quad A^{-1}=A^J.$$

Now we shall calculate the commutators of $W_{ij}$ based on formula
$$
[\frac{\partial}{\partial a_{ij}},\frac{\partial}{\partial a_{kl}}]=\delta_{jk}\frac{\partial}{\partial a_{il}}-\delta_{il}\frac{\partial}{\partial a_{kj}},
$$
to obtain
$$
\begin{array}{lll}
[W_{ij}, W_{kl}]  & = \delta_{jk} (\frac{\partial}{\partial a_{il}}-\varepsilon_{i}\varepsilon_j\varepsilon_{k}\varepsilon_{l}\frac{\partial}{\partial a_{li}}) - \delta_{il} (\frac{\partial}{\partial a_{kj}}-\varepsilon_{i}\varepsilon_{j}\varepsilon_{k}\varepsilon_{l}\frac{\partial}{\partial a_{jk}})
\\
&
+ \delta_{ik}(-\varepsilon_{i}\varepsilon_{j}\frac{\partial}{\partial a_{jl}}+\varepsilon_{k}\varepsilon_{l}\frac{\partial}{\partial a_{lj}}) - \delta_{jl} (-\varepsilon_{i}\varepsilon_{j}\frac{\partial}{\partial a_{ki}}+\varepsilon_{k}\varepsilon_{l}\frac{\partial}{\partial a_{ik}}).
\end{array}
$$
Observe that, if $\varepsilon_{i}\varepsilon_{j}=\varepsilon_{k}\varepsilon_{l}=\pm1$, the commutator is a skew-symmetric matrix, in the case $\varepsilon_{i}\varepsilon_{j}=-\varepsilon_{k}\varepsilon_{l}$ one obtains a symmetric matrix.

Each basis vector $\frac{\partial}{\partial a_{ij}}$ can be written in the matrix form by using the standard notation of $(m\times m)$-matrices $E_{ij}$ with zero entries except of 1 at the $i$-row and $j$-column. Then
$$
W_{ij}(1)= E_{ij} -\varepsilon_{i}\varepsilon_{j}E_{ji}, \quad\text{if}\quad 1 \leq i < j \leq m.
$$
and all actions are written as a matrix multiplication
$$
A\cdot W_{ij}(1)=A(E_{ij} -\varepsilon_{i}\varepsilon_{j}E_{ji})\quad\text{and}\quad W_{ij}(1)\cdot A=(E_{ij}  -\varepsilon_{i}\varepsilon_{j}E_{ji})A.
$$
The commutation relations are written as
$$
\begin{array}{lll}
[W_{ij}, W_{kl}]  & = \delta_{jk} (E_{il}-\varepsilon_{i}\varepsilon_j\varepsilon_{k}\varepsilon_{l}E_{li}) - \delta_{il} (E_{kj}-\varepsilon_{i}\varepsilon_{j}\varepsilon_{k}\varepsilon_{l}E_{jk})
\\
&
+ \delta_{ik}(-\varepsilon_{i}\varepsilon_{j}E_{jl}+\varepsilon_{k}\varepsilon_{l}E_{lj}) - \delta_{jl} (-\varepsilon_{i}\varepsilon_{j}E_{ki}+\varepsilon_{k}\varepsilon_{l}E_{ik}).
\end{array}
$$


\section{Acknowledgement}

This work was developed whilst the second author visited the
University of Bergen in 2012. 




\end{document}